\newtheorem{theorem}{Theorem}[section]
\newtheorem{corollary}[theorem]{Corollary}
\newtheorem{lemma}[theorem]{Lemma}
\newtheorem{proposition}[theorem]{Proposition}
\theoremstyle{definition}
\newtheorem{definition}[theorem]{Definition}
\newtheorem{example}[theorem]{Example}
\newtheorem{remark}[theorem]{Remark}
\newcommand{\zz}{\mathbb{Z}}
\begin{document}

\title[On entropy and intrinsic ergodicity of coded subshifts]{On entropy and intrinsic ergodicity of coded subshifts}

\begin{abstract}
Any coded subshift $X_C$ defined by a set $C$ of code words contains a subshift, which we call $L_C$, consisting of limits of single code words. We show that when $C$ satisfies a unique decomposition property, the topological entropy $h(X_C)$ of $X_C$ is determined completely by $h(L_C)$ and the number of code words of each length. More specifically, we show that $h(X_C) = h(L_C)$ exactly when a certain infinite series is less than or equal to $1$, and when that series is greater than $1$, we give a formula for $h(X_C)$. In the latter case, an immediate corollary 
(using a result from \cite{CT0}) is that $X_C$ has a unique measure of maximal entropy. 
\end{abstract}

\date{}
\author{Ronnie Pavlov}
\address{Ronnie Pavlov\\
Department of Mathematics\\
University of Denver\\
2390 S. York St.\\
Denver, CO 80208}
\email{rpavlov@du.edu}
\urladdr{www.math.du.edu/$\sim$rpavlov/}
\thanks{The author gratefully acknowledges the support of NSF grant DMS-1500685.}
\keywords{Symbolic dynamics, coded subshifts, topological entropy}
\renewcommand{\subjclassname}{MSC 2010}
\subjclass[2010]{Primary: 37B10; Secondary: 37B40}
\maketitle


\section{Introduction}\label{intro}

The class of coded subshifts has been a fruitful area of research in symbolic dynamics, both in terms of general properties (\cite{codedbook}, \cite{blanhan}, \cite{kwietcoded}, \cite{fiebigs}, \cite{kriegercoded}) and as a source of examples with interesting properties. Informally, a coded subshift is defined as the set of all limits of arbitrary bi-infinite concatenations of finite \textbf{code words} from a predetermined set; see Section~\ref{defs} for a formal definition. Recently, several works (\cite{vaughntowers}, \cite{vaughnronnie}, \cite{pavlovspec}) have examined whether various coded subshifts have unique measure of maximal entropy. In those works, a sort of dichotomy seems to arise for coded subshifts, coming from whether the act of concatenation increases topological entropy or not. (For instance, if the set of code words is all words on $\{0,1\}$, then nothing is gained by the step of allowing arbitrary concatenations.)

The main motivation for this work is to formalize this dichotomy and show that a single infinite series controls much of the behavior of any coded subshift. The series in question is motivated by a natural connection between coded subshifts and countable-state topological Markov chains, and in fact the main purpose of this work is to formalize this connection in a way that has not (to our knowledge) yet been done.

We need to make some definitions to state our main results. Say that $C \subseteq A^*$ is an arbitrary set of words over a finite alphabet $A$, and for each $n$ define $C_n = C \cap A^n$. Define $B_C$ to be the set of all biinfinite concatenations of the words in $C$, i.e. 
\[
B_C = \{x \in A^{\mathbb{Z}} \ : \ \exists s_k \rightarrow \infty \textrm{ s.t. } \forall k, x([s_k, s_{k+1})) \in C\}.
\]
We say that $C$ has \textbf{unique decipherability} if every $x \in B_C$ is associated to a single such sequence $s_k$. A weaker property is that of \textbf{unique decomposition}, which means that no finite word can be written as a concatenation of words in $C$ in two different ways. 
The set $B_C$ is shift-invariant, but may not be closed; therefore we define the \textbf{coded subshift} associated to $C$ to be $X_C = \overline{B_C}$. We also define the following important subset of $X_C$:
\[
L_C = \{x \in A^{\mathbb{Z}} \ : \ \forall k, x([-k,k]) \textrm{ is a subword of some word in } C\}.
\]
Clearly $L_C \subseteq X_C$. We note that $L_C$ is empty if and only if $C$ is finite.
Most behavior of the coded subshift $X_C$ is given by $L_C$ and $B_C$, since every invariant measure $\mu$ on $X_C$ has $\mu(L_C \cup B_C) = 1$; see Lemma~\ref{LB}.

As mentioned earlier, various recent works have suggested that much of the behavior of $X_C$ is determined by whether the inequality $h(X_C) > h(L_C)$ holds. Most notably, it was shown in \cite{CT0} (see Theorem\ref{CTthm} below) that this inequality implies uniqueness of the measure of maximal entropy, and in \cite{vaughntowers} (see Theorem~\ref{vaughnthm} below) that under the additional assumption of unique decipherability, that measure has several desirable statistical properties (for instance, it is essentially Bernoulli, modulo possible periodic behavior). In the other direction, the examples from \cite{vaughnronnie} and \cite{pavlovspec} with multiple measures of maximal entropy are obtained by choosing $C$ so that $L_C$ is a simple subshift with multiple MMEs, and then proving that $h(X_C) = h(L_C)$.

The main results of this work show that the generating function
\[
f_C(\alpha) = \sum_{j \in \mathbb{N}} |C_j| e^{-j\alpha}
\]
completely determines whether $h(X_C) > h(L_C)$, and can be used to solve for $h(X_C)$ when it is not equal to $h(L_C)$. We note that $f_C(\alpha)$ may be infinite, but that it is continuous and strictly decreasing (since at least one $|C_j|$ is positive) on its interval of convergence. 

Our main results are the following.

\begin{theorem}\label{mainthm2}
If $f_C(h(L_C)) < 1$, then $h(X_C) = h(L_C)$ and every measure of maximal entropy on $X$ has support contained in $L$.
\end{theorem}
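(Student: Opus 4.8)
The plan is to obtain both conclusions simultaneously from the variational principle together with the ergodic decomposition, by bounding the entropy of an arbitrary ergodic measure according to where it is supported. Since $L_C$ is a subsystem of $X_C$, the inequality $h(X_C) \ge h(L_C)$ is automatic, so the content is the reverse inequality and the support statement. By Lemma~\ref{LB} every invariant measure is carried by $L_C \cup B_C$, and for an \emph{ergodic} $\mu$ each of the invariant sets $L_C, B_C$ has measure $0$ or $1$; as their union has full measure, either $\mu(L_C) = 1$ or $\mu(B_C) = 1$. In the first case $h_\mu \le h(L_C)$ because $\mu$ is then a measure on the subsystem $L_C$. The whole difficulty is to bound $h_\mu$ when $\mu(B_C) = 1$ and to show the bound is \emph{strictly} below $h(L_C)$.

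So fix ergodic $\mu$ with $\mu(B_C) = 1$. First I would check that the set $A$ of points having a code-word boundary at the origin has $\mu(A) > 0$: by invariance all the sets $\{x : n \text{ is a boundary}\}$ have the common value $\mu(A)$, and their union is $B_C$, so $\mu(A) = 0$ would give $\mu(B_C) = 0$. Using the standing unique-decomposition hypothesis to make the bi-infinite factorization (hence the boundary set) $\mu$-almost-everywhere well defined, I would represent the shift on $B_C$ as a Kakutani skyscraper over the induced system on $A$, whose first-return map $T_A$ is measurably conjugate to the full shift $\sigma$ on the countable alphabet $C$ (code each point of $A$ by its sequence of code words) and whose return time is the code-word length $\ell$. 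Kac's formula gives $\int_A \ell \, d\mu_A = 1/\mu(A) < \infty$, so the mean code length $\bar\ell$ is finite, and Abramov's formula yields
\[
h_\mu(\text{shift}) \;=\; \mu(A)\,h_{\mu_A}(T_A) \;=\; \frac{h_{\bar\mu}(\sigma)}{\bar\ell},
\]
where $\bar\mu$ is the coded measure on $C^{\mathbb{Z}}$. Bounding $h_{\bar\mu}(\sigma)$ by the entropy $H(p)$ of the time-zero code-word distribution $p = (p_c)_{c\in C}$ (the time-zero partition generates for a full shift) gives $h_\mu \le H(p)/\sum_{c} p_c \ell_c$.

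It remains to carry out the optimization $\sup_p H(p)/\sum_c p_c \ell_c = \alpha^*$, where $\alpha^*$ solves $f_C(\alpha^*) = 1$. This is the Gibbs variational inequality: for every $\alpha$ in the region of convergence, nonnegativity of the relative entropy of $p$ against the distribution proportional to $e^{-\alpha \ell_c}$ gives $H(p) \le \alpha \sum_c p_c \ell_c + \log f_C(\alpha)$, and taking $\alpha = \alpha^*$ removes the last term, so $H(p)/\sum_c p_c \ell_c \le \alpha^*$. Because $f_C$ is continuous and strictly decreasing with $f_C(h(L_C)) < 1 = f_C(\alpha^*)$, we get $\alpha^* < h(L_C)$, whence $h_\mu \le \alpha^* < h(L_C)$ for every ergodic measure on $B_C$. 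Combining the two cases, every ergodic measure has entropy at most $h(L_C)$, so the variational principle gives $h(X_C) \le h(L_C)$ and hence equality. For the support claim, decompose a measure of maximal entropy $\mu$ into ergodic components; almost every component has entropy $h(X_C) = h(L_C) > \alpha^*$, so by the above it cannot live on $B_C$ and must have $\mu_\omega(L_C) = 1$. Integrating gives $\mu(L_C) = 1$, and since $L_C$ is closed this is exactly $\mathrm{supp}(\mu) \subseteq L_C$.

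I expect the main obstacle to be the rigorous skyscraper representation and the application of Abramov's formula in this countable-state setting: verifying that under the unique-decomposition hypothesis the boundary set is $\mu$-a.e.\ well defined, that coding by code-word sequences is a measurable isomorphism onto a measure on $C^{\mathbb{Z}}$, and that $\bar\ell < \infty$ so the formula applies. The Gibbs optimization and the bookkeeping that places $\alpha^*$ strictly below $h(L_C)$ are more routine, though one must also dispose of the degenerate possibilities that $f_C$ never attains the value $1$ (the same inequality then gives an even smaller bound, using $\bar\ell < \infty$) and that $h(L_C) = 0$ (which cannot occur nontrivially, since $f_C(0) = |C| \ge 1$ whenever $C \neq \emptyset$). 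A purely combinatorial alternative for the entropy equality would bound $|L_n(X_C)|$ by splitting each word as a code-word suffix, a run of whole code words, and a code-word prefix and summing the series $S(\alpha)\bigl(1 - f_C(\alpha)\bigr)^{-1}P(\alpha)$; but making its convergence region match $h(L_C)$ requires separately identifying the growth rate of subwords of code words with $h(L_C)$, and in any case it does not yield the support statement.
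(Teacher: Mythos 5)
Your overall strategy (ergodic decomposition via Lemma~\ref{LB}, then a code-word tower with Kac/Abramov plus the Gibbs inequality to bound the entropy of any ergodic measure carried by $B_C$) is coherent, and it is essentially the countable-state Markov chain route that the paper itself sketches in Section~\ref{markov}. But there is a genuine gap: you invoke a ``standing unique-decomposition hypothesis'' that Theorem~\ref{mainthm2} does not have. The theorem is stated, and proved in the paper, for a completely arbitrary set $C$ of code words; neither unique decomposition nor unique decipherability is assumed (those hypotheses appear only in Theorem~\ref{mainthm3}, Theorem~\ref{aux2}, and Corollary~\ref{maincor}). Your construction needs the bi-infinite factorization of $\mu$-a.e.\ point of $B_C$ to be well defined in order to define the boundary set $A$, the return time $\ell$, and the measurable conjugacy of the induced map with the shift on $C^{\mathbb{Z}}$. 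Without any hypothesis on $C$, a point of $B_C$ can admit many parsings --- indeed uncountably many (take $C=\{0,00\}$ and the all-zeros point) --- so neither $A$ nor the coding map exists, and a shift-equivariant measurable selection of parsings is not something that can be waved into existence. Moreover, even the hypothesis you assume would not suffice: unique decomposition in this paper concerns only finite words, while a.e.\ well-definedness of a bi-infinite parsing is essentially unique decipherability, a strictly stronger property; the paper is explicit about this distinction, and its Proposition~\ref{extraMME}, which performs exactly the coding you want, assumes unique decipherability and remarks that weakening it to unique decomposition would require ``more work.'' So, as written, your argument proves a weaker statement than Theorem~\ref{mainthm2}.

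By contrast, the paper's proof is purely combinatorial precisely so as to avoid any parsing uniqueness: over-counting decompositions is harmless for upper bounds. The equality $h(X_C)=h(L_C)$ follows from Theorem~\ref{aux1} (every word of $\mathcal{L}_n(X)$ is a subword of \emph{some} concatenation, and the resulting sum is dominated by a geometric series with ratio $f_C(\alpha)<1$), and the support statement is proved by fixing $u\in\mathcal{L}(X)\setminus\mathcal{L}(L)$, observing that every occurrence of $u$ must lie within a bounded distance $N$ of a concatenation boundary, and using the ergodic theorem to show that any ergodic $\mu$ with $\mu([u])>0$ satisfies $h(\mu)\le h(L)+\epsilon+\beta\ln f_C(h(L))<h(L)$, where $\beta>0$ reflects the forced density of boundaries. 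If you wanted to rescue your route in full generality, you would need something like lifting $\mu$ to an invariant measure on the Borel system of (point, parsing) pairs, and then a wandering-set argument showing that a.e.\ lifted point carries either a genuine bi-infinite parsing (to which your tower argument applies) or a degenerate cut set that forces the underlying point into $L_C$; that is real work, and it is exactly what your appeal to uniqueness sidesteps. Your auxiliary steps (Kac giving $\bar\ell<\infty$, the Gibbs bound $H(p)\le\alpha\bar\ell+\log f_C(\alpha)$, and the handling of the case where $f_C$ never attains $1$ by taking $\alpha=h(L_C)$ directly) are fine once the tower exists.
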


\begin{theorem}\label{mainthm1}
If $f_C(h(L_C)) = 1$, then $h(X_C) = h(L_C)$, and there may or may not be a measure of maximal entropy on $X$ with support not contained in $L$.
\end{theorem}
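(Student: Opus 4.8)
The plan is to treat the two assertions separately. For the entropy equality I would argue that $h(X_C)=h(L_C)$ by exactly the counting estimate used for Theorem~\ref{mainthm2}: writing $m_\ell$ for the number of length-$\ell$ words that are genuine concatenations of code words, unique decomposition gives the renewal recursion $m_\ell=\sum_{j} |C_j|\,m_{\ell-j}$, whose generating series is $M(x)=(1-\sum_j |C_j|x^j)^{-1}$. Since $\sum_j |C_j| x^j=f_C(-\log x)$ and $f_C(h(L_C))=1$, the denominator $1-\sum_j|C_j|x^j$ has its smallest positive zero exactly at $x=e^{-h(L_C)}$, which (because $|C_j|\le q_j$ with $q_j$ the number of length-$j$ subwords of code words) also equals the radius of convergence of the numerator; as $M$ has nonnegative coefficients, Pringsheim gives radius of convergence $e^{-h(L_C)}$, so $\limsup_\ell \tfrac1\ell\log m_\ell=h(L_C)$. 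Decomposing an arbitrary word of $X_C$ as (suffix of a code word)(concatenation)(prefix of a code word), and noting that prefixes and suffixes of code words grow at rate at most $h(L_C)$, a convolution bound gives $h(X_C)\le h(L_C)$; the reverse inequality is immediate from $L_C\subseteq X_C$. This step only reprises Theorem~\ref{mainthm2}, as it uses $f_C(h(L_C))\le 1$ rather than the strict inequality.

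The substance is the ``may or may not'' clause, for which I would isolate the quantity $\sum_j j\,|C_j|e^{-jh(L_C)}=-f_C'(h(L_C))$ and build the dichotomy around its finiteness. The engine is a bound on measures that genuinely use concatenation. For an ergodic $\mu$ on $X_C$, if the set of ``code-word starts'' has $\mu$-measure zero then $\operatorname{supp}\mu\subseteq L_C$; otherwise that set has positive measure $\rho$, and I would induce on it. Abramov's formula gives $h(\mu)=\rho\cdot h(T_{\mathrm{ret}})$ for the first-return map $T_{\mathrm{ret}}$, which is conjugate to a shift over the countable alphabet $C$; bounding its entropy rate by the single-symbol marginal $H(\nu)$, and splitting $\nu$ into a length distribution $\{\mu_j\}$ and a uniform-type choice within each length, yields
\[
h(\mu)\ \le\ \frac{H(\{\mu_j\})+\sum_j \mu_j\log|C_j|}{\sum_j j\,\mu_j}.
\]
The right-hand side is $\le h(L_C)$ by the Gibbs/Jensen inequality $\sum_j \mu_j\log\frac{|C_j|e^{-jh(L_C)}}{\mu_j}\le \log\sum_j |C_j|e^{-jh(L_C)}=\log f_C(h(L_C))=0$, with equality precisely when $\mu_j\propto |C_j|e^{-jh(L_C)}$.

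From here the dichotomy is immediate. If $\sum_j j\,|C_j|e^{-jh(L_C)}<\infty$, the equality distribution $\mu_j=|C_j|e^{-jh(L_C)}$ has finite mean, so the corresponding i.i.d.\ renewal measure is a genuine shift-invariant probability measure charging the concatenation part; it attains $h(L_C)=h(X_C)$ and hence is an MME whose support is not contained in $L_C$. If instead $\sum_j j\,|C_j|e^{-jh(L_C)}=\infty$, the equality distribution has infinite mean and so is unrealizable by a probability measure with $\rho>0$; every such measure then satisfies $h(\mu)<h(L_C)$ strictly, forcing every MME onto $L_C$. To show both cases occur I would give explicit examples over $\{0,1,*\}$ with $C_n=\{*w:\ w\in S_{n-1}\}$, where the marker $*$ appears only at code-word starts (forcing unique decipherability and making the ``code-word start'' set the clopen cylinder $[*]$), and where each $S_{n-1}$ contains a length-$(n-1)$ prefix of a fixed disjunctive binary sequence (forcing $L_C=\{0,1\}^{\zz}$, $h(L_C)=\log 2$, and $f_C(\log2)=\sum_n|S_{n-1}|2^{-n}$). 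Taking $|S_{n-1}|=1$ gives $f_C(\log2)=1$ with $\sum_n n\,2^{-n}<\infty$, so an MME off $L_C$ exists, while taking $|S_{n-1}|\approx 2^n/n^2$ normalized to $f_C(\log2)=1$ gives $\sum_n n\,|S_{n-1}|2^{-n}=\infty$, so no MME off $L_C$ exists.

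The main obstacle I anticipate is making the renewal reduction rigorous in general: in an arbitrary coded subshift the set of code-word starts is defined only through unique decomposition and need not be clopen, so Abramov's formula and the identification of $T_{\mathrm{ret}}$ with a shift over $C$ require care (integrability of the return time via Kac, measurability of the cut points, and the marginal-entropy bound when $H(\nu)=\infty$). The marker device is exactly what sidesteps this in the examples, turning the cut set into a cylinder, so the delicate general reduction is only needed for the clean statement $h(X_C)=h(L_C)$; for the examples the remaining work is the bookkeeping of verifying simultaneously that the disjunctive payload yields $h(L_C)=\log2$ and that the critical parameter of $f_C$ coincides with $h(L_C)$.
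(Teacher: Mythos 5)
Your proposal is correct in substance, but it takes a genuinely different route from the paper on the part that matters. For the entropy equality the paper does something lighter than your generating-function/Pringsheim argument: it simply notes that $f_C$ is strictly decreasing, so $f_C(h(L_C)) \leq 1$ forces $f_C(\alpha) < 1$ for every $\alpha > h(L_C)$, and then cites Theorem~\ref{aux1} (whose proof is the suffix--concatenation--prefix convolution bound you describe) to get $h(X_C) \leq \alpha$ for all such $\alpha$; note that your claimed \emph{equality} $m_\ell = \sum_j |C_j| m_{\ell-j}$ needs unique decomposition, which Theorem~\ref{mainthm1} does not assume, but this is harmless since only the inequality $m_\ell \leq \sum_j |C_j| m_{\ell-j}$ (true in general) is needed for the upper bound. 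For the ``may or may not'' clause, the paper routes everything through the countable-state Markov chain $E(G_C)$ of Section~\ref{markov}: Proposition~\ref{extraMME} shows that under unique decipherability, $B_C \cap L_C = \varnothing$, and $f_C(h(L_C)) = 1$, an MME with support not contained in $L_C$ exists if and only if $E(G_C)$ has a measure of maximal entropy, which by Gurevich's theorem happens exactly when $G_C$ is positive recurrent, i.e.\ exactly when $\sum_n n|C_n|e^{-nh(L_C)} < \infty$ --- precisely your quantity $-f_C'(h(L_C))$. Your Abramov/Kac/Gibbs argument is essentially a self-contained re-derivation of this Vere-Jones/Gurevich dichotomy specialized to a loop graph: it buys independence from the countable-state literature at the cost of the measurability and induced-map technicalities you correctly flag (which the paper's citation absorbs), while the paper's approach buys brevity at the cost of importing \cite{gurevic3} and \cite{verejones} and of the hypotheses of Proposition~\ref{extraMME}. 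The examples are likewise parallel: the paper's Examples~\ref{nomoreMME} and \ref{moreMME} use $0$-blocks as markers with payloads over $\{1,2\}$ (giving $\sum_n 2n\,2^n 2^{-2n} < \infty$, positive recurrent) and over $\{1,2,3,4\}$ with markers of length $\lfloor \log_2 n\rfloor$ (giving a harmonic series, null recurrent), exactly matching your marker-symbol constructions with $|S_{n-1}| = 1$ and $|S_{n-1}| \approx 2^n/n^2$. One small correction: your finite-mean/infinite-mean criterion as a \emph{general} theorem requires the hypotheses of Proposition~\ref{extraMME} (or your marker device) to make the cut set and the identification with a shift over the alphabet $C$ legitimate; the paper is careful to state the dichotomy only under those hypotheses, since in general $B_C$ can even be contained in $L_C$, making the cut-set induction meaningless.
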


\begin{theorem}\label{mainthm3}
If $f_C(h(L_C)) > 1$ and $C$ has unique decomposition, then $h(X_C) > h(L_C)$ and in fact $h(X_C)$ is the unique solution to the equation 
$f_C(x) = 1$.
\end{theorem}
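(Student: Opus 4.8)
The plan is to compute $h(X_C)$ as the exponential growth rate of the number of admissible words and to squeeze it between two bounds that both equal the root of $f_C$. First I record two reductions. Since $X_C=\overline{B_C}$, any word occurring in a point of $X_C$ already occurs in a nearby point of $B_C$, so the two languages coincide and $h(X_C)=\lim_n \frac1n\log|\mathcal{L}_n(B_C)|$, where $\mathcal{L}_n$ denotes the set of length-$n$ words in the language. Second, write $x^{*}$ for the root of $f_C(x)=1$; it exists and is unique because $f_C$ is continuous, strictly decreasing, and runs from a value exceeding $1$ down to $0$ on its interval of convergence. The hypothesis $f_C(h(L_C))>1=f_C(x^{*})$ together with strict monotonicity forces $x^{*}>h(L_C)$, so the asserted strict inequality $h(X_C)>h(L_C)$ will follow the moment we prove $h(X_C)=x^{*}$.

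For the lower bound I would exploit unique decomposition through a renewal generating function. Let $p(n)$ be the number of length-$n$ words that are concatenations of code words. Counting by the length $j$ of the first code word gives $p(0)=1$ and $p(n)=\sum_{j\ge 1}|C_j|\,p(n-j)$, so with $F(z)=\sum_j |C_j| z^{j}=f_C(-\log z)$ one obtains
\[
\sum_{n\ge 0} p(n)\,z^{n} = \bigl(1-F(z)\bigr)^{-1}.
\]
Unique decomposition guarantees that distinct code-word tuples yield distinct words, so these $p(n)$ words are genuinely distinct elements of $\mathcal{L}_n(B_C)$ (each extends to a point of $B_C$ by periodic padding), whence $|\mathcal{L}_n(B_C)|\ge p(n)$. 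The series $(1-F)^{-1}$ has nonnegative coefficients, so by Pringsheim its radius of convergence is its smallest positive singularity, which is the first positive solution of $F(z)=1$, namely $z^{*}=e^{-x^{*}}$; since $f_C(x^{*})=1$ is finite, $z^{*}$ lies in the closed disc of convergence of $F$, and Cauchy--Hadamard gives $\limsup_n \frac1n\log p(n)=x^{*}$, hence $h(X_C)\ge x^{*}$. As a sanity check, unique decomposition also forces $p(n)\le |A|^{n}$ and therefore $x^{*}\le \log|A|$, as it must.

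For the upper bound, each $w\in\mathcal{L}_n(B_C)$ sits inside a bi-infinite concatenation; cutting at the code-word boundaries that straddle $w$ writes $w=s\,c_1\cdots c_m\,p$, where $s$ is a suffix and $p$ a prefix of code words and $c_1\cdots c_m$ is itself a concatenation. This yields
\[
|\mathcal{L}_n(B_C)| \le \sum_{a+b+r=n} \mathrm{Suf}_a\,p(b)\,\mathrm{Pre}_r,
\]
where $\mathrm{Pre}_r,\mathrm{Suf}_a$ count distinct prefixes and suffixes of code words, both bounded by the number $\mathcal{S}_\ell$ of length-$\ell$ subwords of code words. The crucial input is that $\limsup_\ell \frac1\ell\log \mathcal{S}_\ell \le h(L_C)$. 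Granting this, the right-hand side is the convolution of sequences of rate $\le h(L_C)$ with $p$ of rate $x^{*}>h(L_C)$; factoring out $e^{x^{*}n}$ leaves sums over the end-lengths $a,r$ that converge geometrically, so the rate is exactly $x^{*}$. Combined with the lower bound this gives $h(X_C)=x^{*}$, the unique root of $f_C(x)=1$.

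The heart of the argument, and the step I expect to be hardest, is the subword estimate $\limsup_\ell \frac1\ell\log \mathcal{S}_\ell \le h(L_C)$. The favorable half is a compactness observation: if a length-$\ell$ word occurs in code words arbitrarily far from both ends, then extending it inside those code words and passing to a limit produces a point of $L_C$ containing it, so such ``deep'' subwords number at most $|\mathcal{L}_\ell(L_C)|$. The difficulty is the complementary family of ``shallow'' subwords, those pinned within bounded distance of a code-word boundary in every occurrence: these must be shown not to raise the exponential rate, and here the naive estimate $\mathcal{S}_\ell\le\sum_{L\ge\ell}(L-\ell+1)|C_L|$ diverges, so a more careful argument (isolated as a separate lemma, bounding the boundary contributions in terms of $\mathcal{L}(L_C)$) is required. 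Once the subword rate is controlled, the remainder is bookkeeping with the renewal generating function, and the unique-MME corollary follows by feeding $h(X_C)>h(L_C)$ into the result of \cite{CT0}.
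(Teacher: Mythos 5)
Your proposal is sound and its skeleton matches the paper's: both arguments pinch $h(X_C)$ between an upper and a lower bound at the root $x^{*}$ of $f_C(x)=1$, both use unique decomposition only in the lower bound, and both lean on the same key external fact about subword counts. Your upper bound is essentially the paper's Theorem~\ref{aux1} (same suffix/complete-code-words/prefix decomposition, same convolution estimate). Your lower bound, however, is genuinely different: the paper's Theorem~\ref{aux2} is elementary --- truncate $f_C$ to a finite sum $\eta=\sum_{j\le t}|C_j|e^{-j\alpha}>1$, expand $\eta^{k}$, pigeonhole over the at most $tk$ possible total lengths to find a single length $N$ carrying mass at least $\eta^{k}/(tk)$, and use unique decomposition to turn tuples into distinct words, giving $h(X_C)\ge\alpha+t^{-2}\ln\eta>\alpha$ whenever $f_C(\alpha)>1$ --- whereas you extract the exact exponential growth rate of the concatenation counts $p(n)$ from the renewal identity $\sum_n p(n)z^{n}=(1-F(z))^{-1}$ via Pringsheim and Cauchy--Hadamard. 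Your route is more conceptual (it is essentially the Vere-Jones generating-function picture that the paper only gestures at in Section~\ref{markov}), but it costs you an a priori existence statement: you must know the root $z^{*}=e^{-x^{*}}$ exists strictly inside the region where $F$ is finite, which is not automatic when $f_C(h(L_C))=\infty$; the cure is the inequality $h(L_C)\ge\limsup_n\frac{1}{n}\ln|C_n|$, itself a consequence of Corollary~\ref{subwords} (the paper's bare invocation of the Intermediate Value Theorem is equally terse on this point, so you are in good company). Finally, the lemma you flag as the hardest step --- that the number of length-$\ell$ subwords of code words grows at exponential rate at most $h(L_C)$ --- is exactly Corollary~\ref{subwords}, which the paper itself does not prove but imports from \cite{vaughnronnie} (Lemma 2.7); your instinct to isolate it as a separate lemma is precisely right, and citing that result closes the only gap in your write-up.
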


As mentioned earlier, it is known that $h(X_C) > h(L_C)$ implies some useful properties for $X_C$. (In both of the following results, we give versions using our notation which are equivalent to those in the referenced works by Corollary~\ref{subwords}.)

\begin{theorem}\label{CTthm}{\rm (\cite{CT0}, Theorem B)}
If $X_C$ is a coded subshift, and if $h(X_C) > h(L_C)$, then $X_C$ has a unique measure of maximal entropy.
\end{theorem}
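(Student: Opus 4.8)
The plan is to deduce this from the general machinery of Climenhaga and Thompson (\cite{CT0}) for unique equilibrium states of systems with non-uniform specification, specialized to the zero potential, so that equilibrium states are exactly the measures of maximal entropy. The core of that machinery is a factorization of the language $\mathcal{L} = \mathcal{L}(X_C)$ as a product $\mathcal{C}^{p}\cdot\mathcal{G}\cdot\mathcal{C}^{s}$, in which a ``good core'' collection $\mathcal{G}$ enjoys specification while the prefix and suffix collections $\mathcal{C}^{p},\mathcal{C}^{s}$ have strictly smaller pressure than the full system. For a coded subshift there is an entirely natural such decomposition, and the whole point is that the hypothesis $h(X_C) > h(L_C)$ is precisely the pressure gap their theorem requires.

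Concretely, I would take $\mathcal{G}$ to be the set of all finite concatenations of complete code words from $C$ (together with the empty word), and take both $\mathcal{C}^{p}$ and $\mathcal{C}^{s}$ to be the set of all subwords of code words, which by Corollary~\ref{subwords} is exactly the language of $L_C$. The specification property for $\mathcal{G}$ is then immediate with gap $0$: if $u$ and $v$ are each concatenations of code words, then $uv$ is again such a concatenation, so $uv \in \mathcal{G}\subseteq\mathcal{L}$ with no connecting word needed. I would then check that every $w \in \mathcal{L}$ factors as $w = pgs$ with $p\in\mathcal{C}^{p}$, $g\in\mathcal{G}$, $s\in\mathcal{C}^{s}$. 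Since passing to the closure introduces no new finite words, $\mathcal{L}(X_C) = \mathcal{L}(B_C)$, so $w$ occurs inside some bi-infinite concatenation $y \in B_C$, say $w = y([a,b])$. The code-word cut points of $y$ partition $[a,b]$: the portion of $w$ before the first interior cut point is a suffix of a code word ($p$), the portion between the first and last interior cut points is a concatenation of complete code words ($g$), and the portion after the last cut point is a prefix of a code word ($s$); if $[a,b]$ meets no interior cut point, one sets $g = s = \emptyset$ and $p = w$. In every case $p$ and $s$ are subwords of code words, hence lie in $\mathcal{C}^{p}$ and $\mathcal{C}^{s}$.

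It remains to supply the pressure gap and the regularity hypothesis. With the zero potential the relevant pressures are topological entropies, so the obstruction condition becomes $h(\mathcal{C}^{p}\cup\mathcal{C}^{s}) < h(X_C)$; since $\mathcal{C}^{p}\cup\mathcal{C}^{s}$ is the language of $L_C$, its entropy equals $h(L_C)$, and the hypothesis $h(X_C) > h(L_C)$ provides exactly the strict inequality required. The additional regularity assumption in the Climenhaga--Thompson theorem, namely the Bowen property of the potential along $\mathcal{G}$, is vacuous for the constant potential $\phi\equiv 0$. Invoking their uniqueness result then produces a unique equilibrium state for $\phi\equiv 0$, which is the unique measure of maximal entropy on $X_C$.

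I expect the main obstacle to be matching this concrete decomposition to the precise hypotheses of the abstract theorem rather than any single hard estimate. In particular, the delicate point is confirming that the pressure of the boundary collections is genuinely controlled by $h(L_C)$ and not by some larger quantity arising from over-counting the ways a word can be split, and verifying whatever auxiliary tail or counting conditions appear in the version of \cite{CT0} being applied. Everything else (specification with gap $0$, existence of the factorization, and the triviality of the Bowen condition for $\phi\equiv 0$) is soft; the care lies in the clean identification of the entropy of the prefix and suffix pieces with $h(L_C)$.
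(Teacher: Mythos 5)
First, a note on what you are comparing against: the paper does not prove Theorem~\ref{CTthm} at all. It is an external result, quoted (modulo notation) from \cite{CT0}, Theorem B; the only contribution of the present paper is the observation that the version stated here, phrased via $h(L_C)$, is equivalent to the original, phrased via the growth rate of the sets $W_n$ of $n$-letter subwords of code words, and that equivalence is exactly Corollary~\ref{subwords}. So your proposal is really an attempt to re-derive the cited theorem from the abstract decomposition machinery of \cite{CT0}, and in outline it is the same route Climenhaga and Thompson themselves take: $\mathcal{G}$ equal to the set of concatenations of complete code words has specification with gap $0$; every word of $\mathcal{L}(X_C)=\mathcal{L}(B_C)$ factors as (suffix of a code word)(element of $\mathcal{G}$)(prefix of a code word); the edge collections have exponential growth rate $h(L_C)$; and the Bowen property is vacuous for $\phi\equiv 0$.

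The genuine gap sits precisely at the point you defer as ``verifying whatever auxiliary tail or counting conditions appear.'' The abstract uniqueness theorem of \cite{CT0} has a third hypothesis beyond specification for $\mathcal{G}$ and the pressure gap, usually labelled [III]: for every $M$, the collection $\mathcal{G}(M)$ of words admitting a decomposition $uvw$ with $u\in\mathcal{C}^p$, $v\in\mathcal{G}$, $w\in\mathcal{C}^s$ and $|u|,|w|\le M$ must itself satisfy specification with a gap $\tau(M)$ depending on $M$. This is the one hypothesis whose verification for coded subshifts requires an actual argument, and it is exactly where your choice $\mathcal{C}^p=\mathcal{C}^s=\{\textrm{all subwords of code words}\}$ backfires. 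With the standard choice --- $\mathcal{C}^p$ the suffixes and $\mathcal{C}^s$ the prefixes of code words, which is what your own factorization actually produces --- condition [III] holds: there are finitely many words of length at most $M$, so one may fix for each a completion to a full code word, with completion lengths bounded by some $\tau(M)$; then two words $u_1v_1w_1$ and $u_2v_2w_2$ in $\mathcal{G}(M)$ glue as $u_1\,[v_1\,(w_1 r)(l\,u_2)\,v_2]\,w_2$, where $w_1r, \ l u_2 \in C$, the bracketed middle lies in $\mathcal{G}$, and the connecting word $rl$ has length at most $\tau(M)$. But if the inner edge words $w_1$ and $u_2$ are allowed to be arbitrary subwords sitting strictly inside code words, they cannot be completed to full code words on one side only; the middle block of the glued word is then not a concatenation of complete code words, and the verification of [III] collapses. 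A smaller, related slip: the set of subwords of code words is not ``exactly the language of $L_C$'' --- the paper is explicit that $\mathcal{L}(L_C)$ consists only of words occurring in code words arbitrarily far from their ends --- and Corollary~\ref{subwords} asserts only equality of exponential growth rates, which fortunately is all your pressure-gap step actually needs.
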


Under the additional assumption of unique decipherability, \cite{vaughntowers} yields more information. In fact, the results there are more general, applying to the equilibrium state of any 
H\"{o}lder continuous potential $\phi$. We state a version here only for MMEs (which correspond to equilibrium states for $\phi = 0$).

\begin{theorem}\label{vaughnthm}{\rm (\cite{vaughntowers}, Theorem 1.6)}
If $X_C$ is a coded subshift on a finite alphabet generated by a set $C$ of code words with unique decipherability, and if $h(X_C) > h(L_C)$, then the unique measure of maximal entropy on $X_C$ satisfies additional conditions (ii) - (iv) from Theorem 1.1 of \cite{vaughntowers}.
\end{theorem}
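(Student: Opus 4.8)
The plan is to deduce this statement directly from Theorem 1.6 of \cite{vaughntowers} rather than to reprove anything from scratch; the only genuine content is verifying that the hypotheses stated here in our notation are a faithful translation of the hypotheses appearing there. Theorem 1.6 of \cite{vaughntowers} governs coded subshifts with unique decipherability and imposes an entropy gap condition separating the entropy of the whole system from that of a distinguished ``obstruction'' subshift built from the code words; under that gap it concludes that the measure of maximal entropy enjoys properties (ii)--(iv). Our unique decipherability hypothesis is word-for-word the same, so the entire burden is to show that the gap condition $h(X_C) > h(L_C)$ as phrased here coincides with the gap condition as phrased there.

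To do this I would invoke Corollary~\ref{subwords}. The obstruction subshift in \cite{vaughntowers} is described in terms of the language of finite subwords occurring inside code words, whereas $L_C$ is defined as the set of bi-infinite sequences all of whose central windows are subwords of code words. Corollary~\ref{subwords} should establish precisely that these two descriptions generate the same language, and hence that $L_C$ has the same topological entropy as the obstruction set: the finite words occurring in some element of $L_C$ are exactly the finite subwords of code words. Granting this, the entropy of the ``collar'' set in \cite{vaughntowers} equals $h(L_C)$, and the two gap inequalities are literally the same inequality. Existence and uniqueness of the MME under this gap is already guaranteed (by Theorem~\ref{CTthm}, equivalently by part (i) of Theorem 1.1 of \cite{vaughntowers}), so what remains is only to transport conclusions (ii)--(iv) across this identification.

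The main obstacle I anticipate is definitional bookkeeping rather than hard analysis: one must check that the notion of entropy attached to the obstruction set in \cite{vaughntowers} (for instance a Gurevich-type entropy of an associated countable-state Markov shift, or the entropy of a specified collection of admissible words) really does agree with $h(L_C)$ on the nose, with no boundary or periodic words slipping through the identification. The phrase ``modulo possible periodic behavior'' in the surrounding discussion signals that the cleanest form of (iii) may require excising a periodic orbit, so I would track exactly where periodicity enters in \cite{vaughntowers} and confirm that the language identification of Corollary~\ref{subwords} respects it. Once that identification is in hand, the conclusion follows immediately.
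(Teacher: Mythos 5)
Your proposal matches the paper exactly: this theorem is quoted from \cite{vaughntowers}, and the paper's only justification for the restatement is the same translation step you identify, namely that Corollary~\ref{subwords} shows the entropy of the subshift $L_C$ agrees with the growth rate of subwords of code words used to phrase the entropy-gap hypothesis in the cited work. Your attention to the definitional bookkeeping (and to where periodicity enters) is precisely the content the paper delegates to Corollary~\ref{subwords}, so nothing further is needed.
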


The following corollary is immediate.

\begin{corollary}\label{maincor}
If $f_C(h(L_C)) > 1$ and $C$ has unique decomposition, then $X_C$ has a unique measure of maximal entropy (which satisfies additional conditions (ii) - (iv) from Theorem 1.1 of \cite{vaughntowers} if $C$ has unique decipherability).
\end{corollary}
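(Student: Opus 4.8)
The plan is simply to chain together the three results that precede the corollary in the correct order; given those, the statement really is immediate. First I would invoke Theorem~\ref{mainthm3}. Its hypotheses are exactly the two standing assumptions of the corollary, namely $f_C(h(L_C)) > 1$ together with unique decomposition of $C$, and its conclusion delivers the strict entropy gap $h(X_C) > h(L_C)$ (and, as a bonus we will not need here, identifies $h(X_C)$ as the unique root of $f_C(x)=1$). This strict inequality is the one genuinely substantive ingredient, and since it has already been proved we may take it as given.

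With $h(X_C) > h(L_C)$ established, I would then apply Theorem~\ref{CTthm}, the cited result of \cite{CT0}, whose hypothesis is precisely this strict gap for a coded subshift. That theorem yields uniqueness of the measure of maximal entropy on $X_C$, which is the main assertion of the corollary.

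For the parenthetical strengthening under unique decipherability, I would first note that unique decipherability is strictly stronger than unique decomposition (as recorded in Section~\ref{intro}), so that the hypotheses of the corollary continue to hold and, in particular, the entropy gap $h(X_C) > h(L_C)$ persists. The hypotheses of Theorem~\ref{vaughnthm} — a coded subshift on a finite alphabet with unique decipherability and $h(X_C) > h(L_C)$ — are then all met, and applying that theorem upgrades the unique MME to one additionally satisfying conditions (ii)–(iv) from Theorem 1.1 of \cite{vaughntowers}.

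The only place where real work could have been required is in producing the entropy gap $h(X_C) > h(L_C)$, but that is exactly the content of Theorem~\ref{mainthm3}, which we are entitled to assume; the remaining steps are verbatim applications of quoted theorems. Hence the corollary is immediate, and the proof amounts to assembling these implications in sequence.
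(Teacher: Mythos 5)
Your proposal is correct and matches the paper exactly: the paper simply declares the corollary ``immediate,'' and the intended argument is precisely your chain of Theorem~\ref{mainthm3} (giving $h(X_C) > h(L_C)$), then Theorem~\ref{CTthm} for uniqueness, then Theorem~\ref{vaughnthm} for the parenthetical under unique decipherability. Your observation that unique decipherability implies unique decomposition (so the hypotheses of Theorem~\ref{mainthm3} still hold in the parenthetical case) is also consistent with the paper's stated hierarchy of the two properties.
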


Theorems~\ref{mainthm2}, \ref{mainthm1}, and \ref{mainthm3} are proved via the following auxiliary results, which may be of independent interest.

\begin{theorem}\label{aux1}
If $\alpha > h(L_C)$ and $f_C(\alpha) < 1$, then $h(X_C) \leq \alpha$.
\end{theorem}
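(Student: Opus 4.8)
The plan is to bound the number of length-$n$ words in $X_C$ directly, packaged as a single generating function in the variable $e^{-\alpha}$. Since $X_C=\overline{B_C}$, and passing to the closure of a shift-invariant set introduces no new finite subwords, we have $L_n(X_C)=L_n(B_C)$ for every $n$, so $h(X_C)=\lim_n \tfrac1n\log|L_n(B_C)|$. It therefore suffices to prove that $\sum_n |L_n(B_C)|\,e^{-n\alpha}<\infty$: convergence of this nonnegative series forces $|L_n(B_C)|\le Ce^{n\alpha}$ for some constant $C$, and hence $h(X_C)\le\alpha$.

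The combinatorial heart is a canonical decomposition of each word of $B_C$. Let $S_n$ denote the set of length-$n$ subwords of code words. Fix $w\in L_n(B_C)$ and realize it as $x([0,n))$ for some $x\in B_C$ with cut points $\{s_k\}$. If $w$ lies inside a single code-word block then $w\in S_n$; otherwise $w$ factors as $p\cdot \mathrm{mid}\cdot q$, where $p$ (length $a$) is a suffix of a code word, $\mathrm{mid}$ (length $M$) is a concatenation of complete code words, and $q$ (length $b$) is a prefix of a code word, with $a+M+b=n$. Since suffixes and prefixes of code words are in particular subwords of code words, writing $p_M$ for the number of concatenations of code words of total length $M$ (with $p_0=1$) and adopting the convention $|S_0|=1$, this decomposition yields the overcount
\[
|L_n(B_C)|\ \le\ \sum_{a+M+b=n}|S_a|\,p_M\,|S_b|,
\]
equivalently $\sum_n |L_n(B_C)|\,y^n\le G(y)^2\,P(y)$ at $y=e^{-\alpha}$, where $G(y)=\sum_{a\ge0}|S_a|\,y^a$ and $P(y)=\sum_{M\ge0}p_M\,y^M$.

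It then remains to check that both factors are finite at $y=e^{-\alpha}$. Each length-$M$ concatenation is an ordered tuple of code words, so $P(y)=\sum_{k\ge0}\bigl(\sum_j|C_j|y^j\bigr)^k$; at $y=e^{-\alpha}$ this is $\sum_{k\ge0}f_C(\alpha)^k=(1-f_C(\alpha))^{-1}$, which is finite exactly because $f_C(\alpha)<1$. For the boundary factor I would appeal to Corollary~\ref{subwords}, according to which the number $|S_a|$ of length-$a$ subwords of code words has exponential growth rate $h(L_C)$; since $\alpha>h(L_C)$, the series $G(e^{-\alpha})=\sum_a|S_a|e^{-a\alpha}$ converges. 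Both factors being finite, the product $G(e^{-\alpha})^2\,P(e^{-\alpha})$ is finite, which is what we needed.

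I expect the main obstacle to be the control of the boundary factor $G$. The partial code words $p$ and $q$ at the two ends of $w$ need not extend to bi-infinite points of $L_C$, so one cannot simply replace $|S_a|$ by $|L_a(L_C)|$ — these languages genuinely differ in general. The saving input is precisely that the exponential growth rate of subwords of code words nonetheless coincides with $h(L_C)$ (Corollary~\ref{subwords}), and this is exactly what keeps $G(e^{-\alpha})$ summable under the sole hypothesis $\alpha>h(L_C)$. The remaining bookkeeping — folding the single-block case and the situations where $w$ begins or ends exactly at a cut point into the displayed sum — is routine and is absorbed by the convention $|S_0|=1$.
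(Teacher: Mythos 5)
Your proof is correct and is essentially the paper's argument: the same decomposition of a word of $\mathcal{L}_n(X_C)$ into a suffix of a code word, a block of complete code words, and a prefix of a code word, with the boundary pieces controlled by Corollary~\ref{subwords} (this is where $\alpha > h(L_C)$ enters) and the middle controlled by the geometric series $\sum_k f_C(\alpha)^k$ (this is where $f_C(\alpha) < 1$ enters). The only real difference is bookkeeping: you sum over all $n$ at once and bound the generating function by $G(e^{-\alpha})^2 (1 - f_C(\alpha))^{-1}$, whereas the paper bounds $|\mathcal{L}_n(X_C)|$ separately for each $n$, which is why its estimate carries an extra polynomial factor $nM^2$ that your Cauchy-product packaging absorbs automatically.
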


\begin{theorem}\label{aux2}
If $f_C(\alpha) > 1$ and $C$ has unique decomposition, then $h(X_C) > \alpha$.
\end{theorem}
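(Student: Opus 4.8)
The plan is to produce an explicit lower bound on the number of length-$n$ words in the language of $X_C$ by counting words that are genuine concatenations of code words, and then to identify the exponential growth rate of that count with a root of a polynomial equation. Throughout I write $p_j = |C_j|$, so that $f_C(\alpha) = \sum_j p_j e^{-j\alpha}$, and I let $N_n$ denote the number of length-$n$ words appearing in $X_C$, so that $h(X_C) = \lim_n \tfrac1n \log N_n$ (the limit existing by the usual subadditivity of the language).

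First I would truncate. Since $f_C(\alpha) > 1$ and the terms $p_j e^{-j\alpha}$ are nonnegative, the partial sums eventually exceed $1$, so I may fix a finite $J$ with $\sum_{j=1}^{J} p_j e^{-j\alpha} > 1$. From now on I use only code words of length at most $J$, reducing the analysis to a finite, polynomial computation. Next, let $b_n$ be the number of \emph{sequences} $(c_1,\dots,c_k)$ of code words, each of length at most $J$, whose total length is $n$. The crucial use of unique decomposition is that the map sending such a sequence to the word $c_1c_2\cdots c_k \in A^n$ is injective; hence $b_n$ equals the number of \emph{distinct} length-$n$ words arising as such concatenations. Each such word extends, by padding on both sides with repetitions of any fixed code word, to a point of $B_C \subseteq X_C$, so it appears in $X_C$; therefore $N_n \ge b_n$ for all $n$. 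This is exactly the step that requires the hypothesis: without unique decomposition one controls only sequences, not distinct words, and the inequality can fail.

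Finally I would analyze the growth of $b_n$ by a generating-function argument. With $P_J(z) = \sum_{j=1}^J p_j z^j$, summing over the number $k$ of code words in a sequence gives the ordinary generating function $B(z) = \sum_{n\ge 0} b_n z^n = (1 - P_J(z))^{-1}$. Since $P_J$ has nonnegative coefficients and $P_J(0)=0$, it is continuous and strictly increasing on $[0,\infty)$; because $P_J(e^{-\alpha}) > 1$, there is a unique $z_1 \in (0, e^{-\alpha})$ with $P_J(z_1) = 1$, and writing $z_1 = e^{-\beta^*}$ forces $\beta^* > \alpha$. For $|z| < z_1$ one has $|P_J(z)| \le P_J(|z|) < 1$, so $B$ is analytic there, while $B$ has a genuine pole at $z_1$ (the numerator is $1$); hence the radius of convergence of $B$ is exactly $z_1$, giving $\limsup_n \tfrac1n \log b_n = -\log z_1 = \beta^*$. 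Combining with $N_n \ge b_n$ and the existence of the entropy limit yields $h(X_C) = \lim_n \tfrac1n \log N_n \ge \beta^* > \alpha$.

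The main obstacle I anticipate is the third step: pinning down the growth rate of $b_n$ as precisely $\beta^*$ rather than merely bounding it, which is where the analytic (Pringsheim-type) input about the dominant singularity of $(1-P_J(z))^{-1}$ enters. The truncation to the polynomial $P_J$ is what makes this clean and sidesteps any difficulty arising when $f_C(\alpha)$ is infinite or when $P(z) = \sum_j p_j z^j$ has an awkward radius of convergence. The only other point demanding care is the reduction from counting sequences to counting distinct words, which must invoke unique decomposition explicitly to guarantee $N_n \ge b_n$.
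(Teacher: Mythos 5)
Your proof is correct, and its combinatorial core coincides with the paper's: both arguments truncate the series to finitely many code-word lengths (your $J$, the paper's $t$) so that the truncated sum still exceeds $1$, and both invoke unique decomposition in exactly the same way, to guarantee that distinct tuples of code words produce distinct concatenations, so that tuple counts bound $|\mathcal{L}_n(X_C)|$ from below. Where you diverge is the final analytic step. The paper expands $\eta^k = \bigl(\sum_{j\le t}|C_j|e^{-j\alpha}\bigr)^k$, pigeonholes over the at most $tk$ possible total lengths to find a subsequence $N_k$ carrying at least $e^{N_k\alpha}\eta^k/(tk)$ distinct words, and concludes $h(X_C)\ge \alpha + t^{-1}\ln\eta > \alpha$ (the paper writes $t^{-2}$, a weaker but still sufficient constant); this is entirely elementary and needs no facts about power series. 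You instead identify the exact exponential growth rate of the concatenation counts $b_n$ via the generating function $(1-P_J(z))^{-1}$ and its smallest positive singularity $z_1$, using nonnegativity of coefficients (Pringsheim, or just monotone convergence, since $\sum_n b_n z_1^n = \sum_k P_J(z_1)^k = \infty$) to pin the radius of convergence at $z_1$, giving $h(X_C)\ge -\ln z_1 > \alpha$. Your route requires this extra analytic input but buys a sharper conclusion: the lower bound is the root of the truncated equation $P_J(e^{-x})=1$, and letting $J\to\infty$ these roots increase to the root of $f_C(x)=1$, so your argument essentially delivers the lower-bound half of Theorem~\ref{mainthm3} directly, rather than only the strict inequality of Theorem~\ref{aux2} (the paper instead obtains that formula by combining Theorems~\ref{aux1} and \ref{aux2} with monotonicity of $f_C$). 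One small point to make explicit in a final write-up: the identity $\sum_n b_n z^n = (1-P_J(z))^{-1}$ should be justified by Tonelli's theorem for $0\le z<z_1$, and the passage from $\limsup_n \frac1n\ln b_n = -\ln z_1$ to $h(X_C)\ge -\ln z_1$ should cite the fact that the entropy of $X_C$ is a genuine limit, as you do.
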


\section*{acknowledgments} 
The author would like to thank Karl Petersen and Omri Sarig for several useful discussions.

\section{General definitions}\label{defs}

\begin{definition}
For any finite alphabet $A$, the \textbf{full shift} over $A$ is the set $A^{\zz} = \{\ldots x_{-1} x_0 x_1 \ldots \ : \ x_i \in A\}$, which is viewed as a compact topological space with the (discrete) product topology.
\end{definition}

\begin{definition}
A \textbf{word} over $A$ is a member of $A^{\{i,i+1,\ldots,j\}}$ for some $i<j$, whose \textbf{length} $j-i+1$ is denoted by $|w|$. The set $\bigcup_{i,j \in \zz, i<j} A^{\{i,i+1,\ldots,j\}}$ of all words over $A$ is denoted by $A^*$. For any $n$, we use $A^n$ to denote the set $A^{\{1,\ldots,n\}}$.
\end{definition}

\begin{definition}
The \textbf{shift action}, denoted by $\{\sigma^n\}_{tn \in \zz}$, is the $\zz$-action on a full shift $A^{\zz}$ defined by $(\sigma^n x)_m = x_{m+n}$ for $m,n \in \zz$. 
\end{definition}

\begin{definition}
A \textbf{subshift} is a closed subset of a full shift $A^{\zz}$ which is invariant under the shift action, which is a compact space with the induced topology from $A^{\zz}$.
\end{definition}

The single shift $\sigma := \sigma^1$ is an automorphism on any subshift, and so for any subshift $X$, $(X,\sigma)$ is a topological dynamical system. 

\begin{definition}
The \textbf{language} of a subshift $X$, denoted by $\mathcal{L}(X)$, is the set of all words which appear in points of $X$. For any $n \in \zz$, $\mathcal{L}_n(X) := \mathcal{L}(X) \cap A^n$, the set of words in the language of $X$ with length $n$. 
\end{definition}


\begin{definition}
For any subshift and word $w \in \mathcal{L}_n(X)$, the \textbf{cylinder set} $[w]$ is the set of all $x \in X$ with $x_1 x_2 \ldots x_n = w$.  
\end{definition}

The main class of subshifts which we treat in this work are the coded subshifts. 

\begin{definition}
For any set $C$ of words on an alphabet $A$, define 
\[
B_C = \{x \in A^{\mathbb{Z}} \ : \ \exists s_k \rightarrow \infty \textrm{ s.t. } \forall k, x([s_k, s_{k+1})) \in C\},
\]
\[  
L_C = \{x \in A^{\mathbb{Z}} \ : \ \forall k, x([-k,k]) \textrm{ is a subword of some word in } C\},
\]
and let $X_C = \overline{B_C}$. We call $X_C$ the \textbf{coded subshift} associated to $C$.
\end{definition}

As mentioned in the introduction, to any $C$ we also associate the following generating function:
\[
f_C(\alpha) = \sum_{j \in \mathbb{N}} |C_j| e^{-j\alpha}.
\]

\begin{definition}
A set $C$ of words on an alphabet $A$ has \textbf{unique decipherability} if no bi-infinite sequence can be written as a bi-infinite concatenation of words in $C$ in multiple ways.
\end{definition} 

\begin{definition}
A set $C$ of words on an alphabet $A$ has \textbf{unique decomposition} if no word can be written as a finite concatenation of words in $C$ in multiple ways.
\end{definition}

\begin{definition}\label{topent}
The \textbf{topological entropy} of a subshift $X$ is
\[
h(X) := \lim_{n \rightarrow \infty} \frac{1}{n} \ln |\mathcal{L}_n(X)|.
\]
\end{definition}

A standard subadditivity argument shows that this limit is in fact an infimum, i.e. for all $n$, 
$h(X) \leq \frac{1}{n} \ln |\mathcal{L}_n(X)|$. Alternately, for all $n$,
\begin{equation}\label{wordcount}
|\mathcal{L}_n(X)| \geq e^{nh(X)}.
\end{equation}

We also need some definitions from measure-theoretic dynamics; all measures considered in this paper will be Borel probability measures on a full shift $A^{\mathbb{Z}}$.

\begin{definition}
A measure $\mu$ on $A^{\mathbb{Z}}$ is {\bf ergodic} if any measurable set $C$ which is shift-invariant, meaning $\mu(C \triangle \sigma C) = 0$, has measure $0$ or $1$. 
\end{definition}

Not all $\sigma$-invariant measures are ergodic, but a well-known result called the ergodic decomposition shows that any non-ergodic measure can be written as a ``convex combination'' (formally, an integral) of ergodic measures; see Chapter 6 of \cite{walters} for more information. 


\begin{definition}\label{measent}
For any $\sigma$-invariant measure $\mu$ on a full shift $A^{\zz}$, the \textbf{measure-theoretic entropy} of $\mu$ is
\[
h(\mu) := \lim_{n \rightarrow \infty} \frac{-1}{n}  \sum_{w \in A^n} \mu([w]) \ln \mu([w]),
\]
where terms with $\mu([w]) = 0$ are omitted from the sum.
\end{definition}



\begin{definition}
For any subshift $X$, a \textbf{measure of maximal entropy} on $X$ is a measure $\mu$ with support contained in $X$ for which $h(\mu) = h(X)$.
\end{definition}

It is well-known that every subshift has at least one measure of maximal entropy, and the ergodic decomposition and affineness of the entropy map (see Theorem 8.7(ii) in \cite{walters}) imply that every measure of maximal entropy on a subshift is a convex combination of ergodic measures of maximal entropy. 

\section{Countable-state topological Markov chains}\label{markov}

A \textbf{countable-state topological Markov chain} is given by a countable graph $G$, which for our purposes we assume to be connected; the associated edge shift $E(G)$ is just the set of biinfinite paths on $G$, where all edges are considered distinct objects. There is a well-known classification of $E(G)$, for which we need some notation. For any vertex $u$, define $p(u,n)$ to be the number of cycles of length $n$ which begin and end with $u$, and define $s(u,n)$ to be the number of those cycles (of length $n$) whose only occurrences of $u$ are at the beginning and end. The \textbf{Gurevich entropy} $h(G)$ is then defined by
\[
h(G) = \lim_{n \rightarrow \infty} \frac{\ln |p(u,n)|}{n}.
\]
Connected countable graphs $G$ can then be classified into three categories, according to the following classification of Vere-Jones (originally proved in \cite{verejones}; also see \cite{ruette}).\\

\noindent
$\bullet$ $G$ is \textbf{transient} if $\sum_{n = 1}^{\infty} s(u,n) e^{-nh(G)} < 1$.

\noindent
$\bullet$ $G$ is \textbf{null recurrent} if $\sum_{n=1}^{\infty} s(u,n) e^{-nh(G)} = 1$, $\sum_{n = 1}^{\infty} s(u,n) ne^{-nh(G)} = \infty$.

\noindent
$\bullet$ $G$ is \textbf{positive recurrent} if $\sum_{n=1}^{\infty} s(u,n) e^{-nh(G)} = 1$, $\sum_{n = 1}^{\infty} s(u,n) ne^{-nh(G)} < \infty$.\\

The following theorem will be relevant for our purposes.

\begin{theorem}{\rm (\cite{gurevic3})}
There exists a Borel probability measure on $E(G)$ with entropy $h(G)$ if and only if $G$ is positive recurrent.
\end{theorem}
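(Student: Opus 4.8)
The plan is to pass to the induced system on a fixed vertex $u$ and reduce both implications to a single computation about a Bernoulli measure on the alphabet of first-return loops. First I would encode the combinatorics in generating functions: set $P(t) = \sum_{n \geq 1} p(u,n) t^n$ and $S(t) = \sum_{n \geq 1} s(u,n) t^n$. Since every loop at $u$ decomposes uniquely as a concatenation of first-return loops, these satisfy the renewal identity $1 + P(t) = (1 - S(t))^{-1}$ on their common interval of convergence. The definition $h(G) = \lim_n \tfrac{1}{n}\ln p(u,n)$ says precisely that $P$ has radius of convergence $R = e^{-h(G)}$, and the renewal identity then forces $S(t) < 1$ for all $t < R$, so $S$ has the same radius and $S(R) = \lim_{t \uparrow R} S(t) \leq 1$. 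With this notation the Vere--Jones trichotomy becomes a statement about the boundary values $S(R) = \sum_n s(u,n) e^{-nh(G)}$ and $RS'(R) = \sum_n n\, s(u,n) e^{-nh(G)}$: positive recurrence is exactly $S(R) = 1$ together with $RS'(R) < \infty$.

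For the existence direction, assume $G$ is positive recurrent. I would build the measure as a tower over the full shift on the countable alphabet $\mathcal{A}$ of first-return loops at $u$, where there are $s(u,n)$ loops $w$ with $|w| = n$. Because $S(R) = 1$, assigning each $w$ the weight $p_w = R^{|w|} = e^{-|w|h(G)}$ gives a genuine probability vector on $\mathcal{A}$; let $\hat\nu$ be the associated Bernoulli measure on $\mathcal{A}^{\zz}$. Reading each symbol as the path it names realizes $\mathcal{A}^{\zz}$ as the first-return system over $[u]$ inside $E(G)$, with return-time function $\rho(w) = |w|$. The mean return time is $\int \rho \, d\hat\nu = \sum_w |w| p_w = RS'(R)$, finite exactly by positive recurrence, so the tower construction yields a $\sigma$-invariant Borel probability measure $\mu$ on $E(G)$. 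Abramov's formula gives $h(\mu) = h(\hat\nu)/\int \rho \, d\hat\nu$, and since $\hat\nu$ is Bernoulli its entropy is $-\sum_w p_w \ln p_w = (-\ln R)\sum_w |w| p_w = h(G)\int \rho\, d\hat\nu$; dividing yields $h(\mu) = h(G)$.

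For the converse, suppose some Borel probability measure of entropy $h(G)$ exists; by the ergodic decomposition and affineness of entropy I may take it to be ergodic, and since it charges some vertex cylinder I choose $u$ to be such a vertex (legitimate because for connected $G$ both $h(G)$ and the recurrence type are vertex-independent). Inducing $\mu$ on $[u]$ gives a measure $\mu_u$ invariant under the first-return map, which is essentially the full shift on $\mathcal{A}$ carrying the return-time potential $\phi(w) = -|w|h(G)$. Kac's formula gives $\int \rho\, d\mu_u = 1/\mu([u]) < \infty$, and Abramov gives $h(\mu_u) = h(G)/\mu([u])$, so $h(\mu_u) + \int \phi\, d\mu_u = 0$. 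The variational principle for $\phi$ on $\mathcal{A}^{\zz}$ bounds this by the pressure $\ln \sum_w e^{\phi(w)} = \ln S(R)$, whence $S(R) \geq 1$. Since always $S(R) \leq 1$, we conclude $S(R) = 1$ and that the bound is attained, identifying $\mu_u$ with the unique equilibrium state for $\phi$, namely the Bernoulli measure with weights $R^{|w|}$. Its mean return time then gives $1/\mu([u]) = \int \rho\, d\mu_u = \sum_w |w| R^{|w|} = RS'(R) < \infty$, which is precisely positive recurrence.

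I expect two delicate points. The first is the reduction in the converse to an ergodic measure charging the chosen vertex, together with the vertex-independence of the classification: excluding a measure whose support slips into a sub-part of $G$ is exactly where connectedness must be used with care. The second, and the genuine analytic heart, is the thermodynamic step on the countable-alphabet full shift: one must justify the variational principle and the uniqueness of the equilibrium state for the locally constant potential $\phi$ in the non-compact setting, and confirm that the supremum is attained exactly by the Bernoulli measure whose mean return time reproduces $RS'(R)$. Everything else is the bookkeeping of the renewal identity and two applications of the Kac and Abramov formulas.
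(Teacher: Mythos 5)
This statement is not proved in the paper at all: it is quoted as background (attributed to Gurevich) for the discussion of countable-state Markov chains in Section 3, so there is no internal proof to compare against. Your proposal is, in essence, a reconstruction of the classical Gurevich/Vere-Jones argument: the renewal identity $1+P(t)=(1-S(t))^{-1}$, the bound $S(R)\leq 1$ at the radius of convergence $R=e^{-h(G)}$, the Bernoulli-tower construction with weights $R^{|w|}$ plus Abramov's formula for the existence direction, and inducing on a vertex plus Kac/Abramov plus a Gibbs-type inequality for the converse. The outline is correct, and both key computations check out: $h(\hat\nu)=-\sum_w R^{|w|}\ln R^{|w|}=h(G)\int\rho\,d\hat\nu$ in the existence direction, and $h(\mu_u)+\int\phi\,d\mu_u=0$ in the converse.

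Two remarks on your flagged ``delicate points.'' First, you do not actually need the full uniqueness theory of equilibrium states on countable-alphabet shifts: writing $\mathcal{P}$ for the first-symbol partition, the chain $0=h(\mu_u)+\int\phi\,d\mu_u\leq H_{\mu_u}(\mathcal{P})+\int\phi\,d\mu_u\leq \ln S(R)\leq 0$ forces equality in the middle (Jensen) step, and the equality case of Jensen already identifies the one-symbol marginal of $\mu_u$ as the Gibbs vector $(R^{|w|})_w$; that marginal alone gives $\int\rho\,d\mu_u=RS'(R)$, which Kac's formula makes equal to $1/\mu([u])<\infty$. You never need to know that $\mu_u$ is Bernoulli. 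Moreover, there is no $\infty-\infty$ issue in that chain: since the positive parts of the terms of $H_{\mu_u}(\mathcal{P})+\int\phi\,d\mu_u$ are dominated by $\sum_w e^{\phi(w)-1}=S(R)/e<\infty$, the finiteness of $\int\rho\,d\mu_u$ (from Kac) automatically forces $H_{\mu_u}(\mathcal{P})<\infty$. Second, the vertex-independence of $h(G)$ and of the recurrence class for connected $G$ is standard. So your proposal is sound as a proof sketch of the cited theorem; just be aware that within the paper this result plays the role of imported background (used for Proposition 5.1 and Examples 5.3--5.4), not something the author establishes.
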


We now return to coded subshifts. Given a set $\mathcal{C}$, there is a natural countable labeled graph $\mathcal{G}_C = (G_C,\ell_C)$ associated to it, which is obtained by fixing a distinguished vertex $u$ and, for every word $w$ in any $C_n$, associating a loop of length $n$ labeled by $w$ which starts and ends at $u$; all of these loops are vertex-disjoint except for $u$. We note that in $G_C$, $s(u,n) = |C_n|$ and $p(u,n) \leq |\mathcal{L}_n(X)|$, and so $h(X_C) \geq h(G_C)$. We also note that $f_C(h(G_C)) = \sum_{n=1}^{\infty} s(u,n) e^{-nh(G_C)}$, which is less than or equal to $1$ for all three possible types of $G$ listed above. 


Then the map $\ell_C$ associates a label sequence to any path in $E(G_C)$; it is simple to check that any such label sequence is in $X_C$, and in fact in $B_C$, the set of biinfinite concatenations of code words. This suggests that $\ell_C$ may yield a correspondence between properties of $X_C$ (such as entropy) and of $E(G_C)$. There are, however, some immediate obstacles. First, $\ell_C$ is injective only when $C$ has unique decipherability, and so generally, $\ell_C$ may not preserve entropy. Secondly, the subset $B_C$ may have significantly lower entropy than $X_C$ itself. This in some sense reflects the intrinsic difference between the compact space $X$ and the noncompact $\ell(E(G_C)) = B_C$.

\begin{remark} 
One can of course try to deal with this issue by compactifying $G_C$, and in some works, including \cite{ruette}, results are shown regarding the one-point compactification $\overline{E(G)}$. However, this contains far less information than $X$ since different limits of pieces of loops have decidedly different limits in $L_C$, whereas all converge to the same point in $\overline{E(G_C)}$.
\end{remark}


As mentioned in the introduction, some aspects of our results are immediate corollaries of this connection to $E(G_C)$. For instance, here is a brief informal alternate proof of the portion of Theorem~\ref{mainthm3} implying $h(X_C) > h(L_C)$ (but not the formula for $h(X_C)$.) 
Assume that $f_C(h(L_C)) > 1$. Recall that $f_C(h(G_C)) \leq 1$ (since $|C_n| = s(u,n)$); since $f$ is decreasing, $h(G_C) > h(L_C)$. We also recall that it is always true that $h(X_C) \geq h(G_C)$, so $h(X_C) \geq h(G_C) > h(L_C)$. 

We will not need $E(G_C)$ again until Section~\ref{examples}, where we will use $\ell_C$ to give examples with and without measures of maximal entropy with support not contained in $L$, completing the proof of Theorem~\ref{mainthm1}. 



\begin{remark}
We should note that extremely careful treatment of the behavior of $\ell_C$ under unique decipherability is exactly what's used in \cite{vaughntowers} to prove Theorem~\ref{vaughnthm} above, under the assumption that $h(L_C) < h(X_C)$.
\end{remark}

\begin{remark}
Under the additional assumption of unique decipherability/decomposition on $C$, our Theorems~\ref{mainthm2} and \ref{mainthm1} could possibly alternately be proved by using the map $\ell_C$ and existing knowledge about transient/null recurrent countable graphs.
\end{remark}

\begin{remark}
In \cite{cec}, some related results were obtained, including an example of a coded subshift 
$X_C$ with $h(X_C) > h(G_C)$ and a method for computation of entropy of an arbitrary irreducible SFT by writing it as a coded shift with $|C| < \infty$ (corresponding to the case $L = \varnothing$) and solving $f_C(h(X_C)) = 1$, referred to as the ``loop method.'' He also proved the formula $f_C(h(X_C)) = 1$ for some examples with $|C| = \infty$, though not in complete generality. (See Example~\ref{petex} for more details.) 
\end{remark}

In the future, where a set $C$ of code words is fixed and there is no danger of ambiguity, we will omit the subscripts on $f_C$, $L_C$, $B_C$, $\ell_C$, and $X_C$ (and other auxiliary objects dependent on $C$) for readability.

\section{Proofs}\label{proofs}

We first verify the following simple claim from the introduction.

\begin{lemma}\label{LB}
For any $C$ and any measure $\mu$ on $X$, $\mu((L \cup B)^c) = 0$.
\end{lemma}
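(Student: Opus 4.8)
The plan is to prove the statement for any shift-invariant Borel probability measure $\mu$ on $X$ (the case relevant to the paper; Poincar\'e recurrence, which drives the argument, requires invariance). The key is to understand $(L\cup B)^c = L^c\cap B^c$ combinatorially. Since $L$ is a subshift, a point $x\in X$ fails to lie in $L$ exactly when it contains some word from the countable set $\mathcal{F}$ of \emph{minimal forbidden words} of $L$, i.e. words that are not subwords of any code word but all of whose proper subwords are. Setting $D = \bigcup_{w\in\mathcal{F}}[w]$, the Borel set of points displaying a forbidden word at the origin, one has $L^c = \bigcup_{i\in\mathbb{Z}}\sigma^{-i}D$.

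The conceptual heart is the claim that \emph{if $x\in X$ contains occurrences of forbidden words at positions tending to both $+\infty$ and $-\infty$, then $x\in B$}. First I would record the elementary but crucial observation that forbidden words force cut points: if $x\in X=\overline{B}$ and $x([i,i+|w|))=w$ with $w$ not a subword of any code word, then for every approximant $x^{(n)}\in B$ agreeing with $x$ on a large enough window, any fixed parsing $\Pi^{(n)}$ of $x^{(n)}$ must have a cut point strictly inside $(i,i+|w|)$, since otherwise $w$ would lie inside a single code word. To prove the claim, choose forbidden-word occurrences on disjoint windows with left endpoints $i_k\to\pm\infty$, and for each $n$ select a cut point $p_k^{(n)}\in(i_k,i_k+|w_k|)$ of $\Pi^{(n)}$. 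Each $p_k^{(n)}$ ranges over a finite set, so a diagonal extraction yields a subsequence $n_j$ and fixed positions $p_k$ that are simultaneously cut points of $\Pi^{(n_j)}$ for all large $j$. For consecutive $p_k,p_{k+1}$ and large $j$, the block $x^{(n_j)}([p_k,p_{k+1}))=x([p_k,p_{k+1}))$ is a concatenation of code words (the $\Pi^{(n_j)}$-blocks lying between the two cut points); concatenating over all $k$ produces a bi-infinite parsing of $x$, so $x\in B$.

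It remains to pass from ``contains a forbidden word somewhere'' to ``forbidden words recur to both $\pm\infty$,'' and here I would use Poincar\'e recurrence. By invariance, $\mu(L^c)>0$ forces $\mu(D)>0$ (since $L^c=\bigcup_i\sigma^{-i}D$ and all translates have equal measure), and the infinitely-often form of Poincar\'e recurrence then gives that $\mu$-a.e. point of $D$ returns to $D$ infinitely often, both forward and backward in time. A short invariance argument upgrades this to the statement that $\mu$-a.e. point of $L^c$ lies in $A^{+\infty}\cap A^{-\infty}$, where $A^{+\infty}=\{x:\sigma^i x\in D \text{ for infinitely many } i>0\}$ and $A^{-\infty}$ is defined with $i<0$. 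By the claim, $A^{+\infty}\cap A^{-\infty}\subseteq B$, so $(L\cup B)^c = L^c\setminus B$ is contained in the Borel $\mu$-null set $L^c\setminus(A^{+\infty}\cap A^{-\infty})$, giving $\mu((L\cup B)^c)=0$.

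Finally I would address measurability: $L$ is closed and $D,A^{+\infty},A^{-\infty}$ are visibly Borel, which already makes the containing null set Borel; the set $B$ is a projection of a Borel set, hence analytic and universally measurable, so $\mu(B)$ is well defined and the identification $(L\cup B)^c=L^c\setminus B$ is legitimate. The main obstacle is the claim in the second paragraph: the cut points of the approximants $x^{(n)}$ need not converge as $n\to\infty$, and the insight that resolves this is that the presence of a forbidden word confines a cut point to a \emph{bounded} window, so that pigeonhole and diagonalization can extract genuine, persistent cut points of the limit point $x$.
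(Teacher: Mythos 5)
Your proof is correct (for shift-invariant measures, which is the intended reading: the paper's own argument also relies on invariance), but it is organized differently from the paper's. The combinatorial core is the same dichotomy, in contrapositive form: the paper asserts, essentially ``by definition,'' that every $x \in X \setminus B$ admits an $N$ such that either $x([N,n])$ is a subword of a code word for all $n > N$, or $x([n,N])$ is for all $n < N$; your claim that forbidden-word occurrences at positions tending to both $+\infty$ and $-\infty$ force $x \in B$ is exactly the contrapositive of this, and you actually prove it, via cut-point confinement plus pigeonhole/diagonalization --- a step the paper glosses over. Where the arguments genuinely diverge is the measure theory. The paper exploits the one-sided structure directly: for $x \notin L \cup B$ there is a \emph{minimal} (resp.\ maximal) such $N$, and the sets $Y_N$ (resp.\ $Z_N$) of points with a given minimal $N$ are pairwise disjoint translates of one another, hence each null for any invariant measure, and $(L \cup B)^c$ is their countable union; this is a bare-hands wandering-set argument. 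You instead invoke Poincar\'e recurrence (applied to $\sigma$ and $\sigma^{-1}$) to show that almost every point of $L^c$ sees forbidden words infinitely often in both directions, and then apply your claim. The two mechanisms are close cousins --- Poincar\'e recurrence is itself proved by a disjointness argument --- so what your route buys is mainly the ability to cite a standard theorem rather than construct the wandering sets by hand, together with an explicit treatment of measurability (noting $B$ is analytic, hence universally measurable, and exhibiting a Borel null superset of $(L\cup B)^c$), which the paper passes over in silence; what the paper's route buys is self-containedness and a sharper structural picture of $(L \cup B)^c$ as a countable disjoint union of translates.
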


\begin{proof}
By definition, for any $x \in X \setminus B = \overline{B} \setminus B$, there exists $N$ so that either $x([N,n])$ is a subword of a word in $C$ for all $n > N$ or $x([n,N])$ is a subword of a word in $C$ for all $n < N$. If in addition $x \notin L$, then in the first case, there must exist a minimal such $N$, else $x \in L$; define $Y_N$ to be the set of all points associated to a minimal such $N$ in this way. Similarly, define $Z_N$ to be the set of all points in the second case associated to a maximal such $N$. Then, 
\[
(L \cup B)^c = \bigcup_{N \in \mathbb{Z}} Y_N \cup \bigcup_{N \in \mathbb{Z}} Z_N.
\]
However, clearly $Y_N = \sigma^n Y_0$ and $Z_N = \sigma^n Z_0$ for all $N$, and just as clearly, the sets $Y_N$ are all disjoint and the sets $Z_N$ are all disjoint. Therefore, they all must have zero measure for any measure $\mu$ on $X$, and so by countable additivity $(L \cup B)^c$ does as well.
\end{proof}

We also need a simple result relating $C_n$ and the language of the subshift $L$, which is nontrivial since only words appearing within words in $C$ arbitrarily far from the center are actually in the language of $L$. We can, however, use the following fact from \cite{vaughnronnie}.

\begin{lemma}{\rm (\cite{vaughnronnie}, Lemma 2.7)}
For any set of words $S$ which is closed under subwords, if we define $S_n = S \cap A^n$ and define the subshift
\[
Y(S) = \{y \in A^{\mathbb{Z}} \ : \ \forall k, y([-k,k]) \textrm{ is a subword of some word in } S\},
\] 
then $\displaystyle \limsup_{n \rightarrow \infty} \frac{\ln{|S_n|}}{n} = h(Y(S))$.
\end{lemma}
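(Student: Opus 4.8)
The plan is to prove the two inequalities separately. First observe that, since $S$ is closed under subwords, $Y(S)$ is precisely the subshift consisting of all $y \in A^{\mathbb{Z}}$ every one of whose subwords lies in $S$, and consequently $\mathcal{L}_n(Y(S)) \subseteq S_n$ for every $n$: any word appearing in a point of $Y(S)$ sits inside a central window of that point, which is a subword of a word in $S$ and hence, by subword-closure, lies in $S$. Since $h(Y(S)) = \lim_n \frac{1}{n} \ln |\mathcal{L}_n(Y(S))|$ is also its limit superior, this containment immediately gives
\[
h(Y(S)) \le \limsup_{n\to\infty} \frac{\ln|S_n|}{n}.
\]
The content of the lemma is therefore the reverse inequality. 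The difficulty is that the containment $\mathcal{L}_n(Y(S)) \subseteq S_n$ is typically strict: a word in $S_n$ need not be two-sided infinitely extendable within $S$, and so need not appear in any bi-infinite point of $Y(S)$. Thus one cannot simply exhibit the words of $S_n$ inside $Y(S)$.

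To capture the growth rate of $S$ inside $Y(S)$ despite this, I would use an empirical-measure argument of the type used to prove the easy half of the variational principle. Fix a subsequence $n_i \to \infty$ along which $\frac{1}{n_i}\ln|S_{n_i}| \to \beta := \limsup_n \frac{\ln|S_n|}{n}$. For each $n = n_i$, let $\mu_n$ be the image of the uniform probability measure on $S_n$ under the map sending $w \in S_n$ to the point of $(A \cup \{\star\})^{\mathbb{Z}}$ that equals $w$ on $[1,n]$ and a fixed filler symbol $\star$ elsewhere, and set $\nu_n = \frac{1}{n} \sum_{j=0}^{n-1} \sigma^j_* \mu_n$. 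Passing to a weak-$*$ convergent subsequence, let $\nu = \lim \nu_{n_i}$, which is shift-invariant. The first thing to check is that $\nu$ is supported on $Y(S)$: for any fixed window length $2L+1$, a $\nu_n$-random point has its window $x([-L,L])$ equal to a subword of some $w \in S_n$ with probability $(n-2L)/n \to 1$, so, using that $S$ is closed under subwords and that cylinder sets are clopen, $\nu(\{x : x([-L,L]) \in S\}) = 1$ for every $L$, and likewise $\nu$ gives no mass to the appearance of $\star$. Intersecting over all $L$ yields $\nu(Y(S)) = 1$.

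It then remains to show $h(\nu) \ge \beta$, which is exactly the standard block-counting estimate from the proof of the variational principle (see \cite{walters}, Theorem 8.6): because the $n$-block of a $\mu_n$-random point is uniformly distributed over the $|S_n|$ words of $S_n$, the averaged measures $\nu_n$ satisfy block-entropy lower bounds that pass, under weak-$*$ limits and the upper semicontinuity of the entropy map on a subshift, to $h(\nu) \ge \limsup_i \frac{1}{n_i}\ln|S_{n_i}| = \beta$. Since $\nu$ is an invariant measure supported on $Y(S)$, the easy half of the variational principle gives $h(Y(S)) \ge h(\nu) \ge \beta$, completing the reverse inequality. I expect the main obstacle to be the support claim together with the bookkeeping in the entropy estimate: one must argue that the words of $S_n$ which fail to be extendable, and hence are not themselves in $\mathcal{L}(Y(S))$, contribute only to a boundary of vanishing density, so that their presence neither pushes mass off of $Y(S)$ in the limit nor diminishes the $\frac{1}{n}\ln|S_n|$ worth of complexity carried by the interior windows. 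A purely combinatorial proof is possible in principle but is awkward precisely here, since controlling the number of non-extendable words of length $n$ requires passing from finite to infinite extendability with uniform counting bounds; the measure-theoretic argument sidesteps this by working with densities.
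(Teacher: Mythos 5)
The first thing to say is that the paper contains no proof of this statement: it is imported verbatim from \cite{vaughnronnie} (Lemma 2.7), so there is no internal argument to compare yours against, and your proposal must be judged on its own merits. So judged, it is correct. The inclusion $\mathcal{L}_n(Y(S)) \subseteq S_n$ (valid precisely because $S$ is closed under subwords) gives the inequality $h(Y(S)) \le \limsup_n \frac{1}{n}\ln|S_n| =: \beta$, and your construction for the reverse inequality is the standard Misiurewicz-type empirical-measure argument: the averaged measures $\nu_n$ are asymptotically invariant, the clopen-cylinder computation gives $\nu(\{x : x([-L,L]) \in S\}) = \lim_i (n_i-2L)/n_i = 1$ for each $L$ (which simultaneously kills the symbol $\star$ and puts $\nu$ on $Y(S)$), and the block-entropy estimate from the proof of the variational principle gives $h(\nu) \ge \beta$, whence $h(Y(S)) \ge h(\nu) \ge \beta$. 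One imprecision: the limiting step does not use ``upper semicontinuity of the entropy map'' --- the measures $\nu_{n_i}$ are not invariant, so $h(\nu_{n_i})$ is not even defined --- but rather the weak-$*$ continuity of $\mu \mapsto H_\mu\bigl(\bigvee_{i=0}^{q-1}\sigma^{-i}\mathcal{P}\bigr)$ for the clopen partition $\mathcal{P}$ into $1$-cylinders, followed by dividing by $q$ and letting $q \to \infty$; since you defer to Theorem 8.6 of \cite{walters}, where exactly this bookkeeping is carried out, this is a cosmetic slip rather than a gap. I would also push back on your closing claim that a purely combinatorial proof is awkward. Because $S$ is subword-closed, the map sending $w \in S_m$ to its sequence of consecutive $n$-blocks (plus remainder) is injective, every block lies in $S_n$, and every block at distance at least $k$ from both ends of $w$ lies in $E_{n,k} := \{v \in S_n : \exists u, u' \in A^k \textrm{ with } uvu' \in S\}$; hence $|S_m| \le |E_{n,k}|^{g_m}\,|S_n|^{2\lceil k/n\rceil + 2}\,|A|^n$ with $g_m \le \lfloor m/n \rfloor$ the number of such central blocks. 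Taking logarithms, dividing by $m$, and letting $m \to \infty$ along a subsequence realizing $\beta$ gives $\beta \le \frac{1}{n}\ln|E_{n,k}|$ for \emph{every} $k$ --- the bound is uniform in $k$ exactly because the bad blocks occupy vanishing density as $m \to \infty$. Since the sets $E_{n,k}$ are finite and decrease in $k$, and a K\"{o}nig's lemma argument identifies $\bigcap_k E_{n,k}$ with $\mathcal{L}_n(Y(S))$, this yields $|\mathcal{L}_n(Y(S))| \ge e^{n\beta}$ for every $n$, finishing the proof with no measure theory at all. Your route costs more machinery but produces an invariant measure of entropy at least $\beta$ supported on $Y(S)$, which is sometimes useful in itself; the combinatorial route is shorter, self-contained, and gives the cardinality bound on $\mathcal{L}_n(Y(S))$ directly.
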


For any set $C$ of code words, if one defines $S$ to be the set of all subwords of $C$, then $S$ is closed under subwords and it's easily checked that $Y(S) = L_C$, yielding the following immediate corollary.

\begin{corollary}\label{subwords}
For a set $C$, define $W_n = W_n(C)$ to be the set of $n$-letter subwords of some word in $C$. Then
\[
\limsup_{n \rightarrow \infty} \frac{\ln{|W_n|}}{n} = h(L_C).
\]
\end{corollary}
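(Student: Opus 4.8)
The plan is to reduce the statement directly to the preceding Lemma by making the right choice of $S$. First I would let $S$ denote the set of all subwords of words in $C$, i.e., the downward closure of $C$ under the subword relation. By construction $S$ is closed under subwords: any subword of an element of $S$ is a subword of a subword of some word in $C$, and since the subword relation is transitive it is again a subword of a word in $C$ and hence lies in $S$. This verifies the single hypothesis needed to invoke the Lemma.

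Next I would match up the two sides of the desired equality with the objects appearing in the Lemma. On the word-counting side, the $n$-letter elements of $S$ are precisely the $n$-letter subwords of words in $C$, so $S_n = W_n$ and therefore $\limsup_n \frac{\ln|S_n|}{n} = \limsup_n \frac{\ln|W_n|}{n}$. On the subshift side, I would check that $Y(S) = L_C$. A point $y$ lies in $Y(S)$ exactly when every central block $y([-k,k])$ is a subword of some word in $S$; since every word in $S$ is itself a subword of a word in $C$, transitivity of the subword relation shows this is equivalent to every $y([-k,k])$ being a subword of a word in $C$, which is precisely the defining condition for $L_C$. Hence $Y(S) = L_C$, and so $h(Y(S)) = h(L_C)$.

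With these two identifications in hand, the Lemma yields $\limsup_n \frac{\ln|S_n|}{n} = h(Y(S))$, which is exactly $\limsup_n \frac{\ln|W_n|}{n} = h(L_C)$, as desired. The genuinely nontrivial content — that the exponential growth rate of the subword counts equals the entropy of the limiting subshift, even though only words appearing arbitrarily deep inside words of $C$ actually belong to $\mathcal{L}(L_C)$ — is entirely supplied by the cited Lemma. Consequently I do not expect any real obstacle here; the only care required is the routine transitivity argument guaranteeing that passing to the subword closure $S$ alters neither the word counts (giving $S_n = W_n$) nor the associated subshift (giving $Y(S) = L_C$), so that the corollary is an immediate specialization.
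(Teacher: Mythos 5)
Your proof is correct and follows exactly the paper's approach: the paper likewise takes $S$ to be the set of all subwords of words in $C$, notes it is closed under subwords, checks $Y(S) = L_C$, and invokes the cited Lemma from \cite{vaughnronnie} to conclude. The only difference is that you spell out the routine transitivity checks the paper leaves as ``easily checked.''
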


We begin with the proofs of the auxiliary Theorems~\ref{aux1} and \ref{aux2}.

\begin{proof}[Proof of Theorem~\ref{aux1}]
Fix a set $C$ of code words, and suppose that $\alpha > h(L)$ and that $f(\alpha) < 1$. For future reference, denote by $P_n$, $S_n$, and $W_n$ the sets of $n$-letter prefixes, suffixes, and subwords 
(respectively) of words in $C$. (We again suppress the dependence on $C$ of these objects for readability.) Note that $P_n, S_n \subseteq W_n$. Then by Corollary~\ref{subwords}, for every $\epsilon$, there exists $M$ so that for all $n$,
\begin{equation}\label{wordbds}
|P_n|, |S_n|, |W_n| < M e^{n(h(L) + \epsilon)}.
\end{equation}

Our proof proceeds via simply bounding $|\mathcal{L}_n(X)|$ from above for each $n$. By the definition of $X$, every word in $\mathcal{L}(X)$ is a subword of a finite concatenation of words from $C$. We can partition words in $\mathcal{L}(X)$ by associating to any $w$ the smallest number $k$ of words in $C$ which must be concatenated to create a word containing $w$. This gives
\[
\mathcal{L}_n(X) = W_n \cup \left(\bigcup_{k=2}^\infty \bigcup_{n_1 + \ldots + n_k = n, n_i > 0} S_{n_1} C_{n_2} \ldots C_{n_{k-1}} P_{n_k}\right).
\]
This yields the following inequality:
\[
|\mathcal{L}_n(X)| \leq |W_n| + \sum_{k=2}^{\infty} \sum_{n_1 + \ldots + n_k = n, n_i > 0} |S_{n_1}| |C_{n_2}| \ldots |C_{n_{k-1}}| |P_{n_k}|.
\]
Choose $\epsilon = \alpha - h(L)$ and apply (\ref{wordbds}):
\[
|\mathcal{L}_n(X)| < Me^{n\alpha} + \sum_{k=2}^{\infty} \sum_{n_1 + \ldots + n_k = n, n_i > 0} M^2 e^{(n_1 + n_k)\alpha} \prod_{i=2}^{k-1} |C_{n_i}|.
\]
Some factoring yields
\[
|\mathcal{L}_n(X)| < e^{n\alpha} \left( M + n M^2 \sum_{k=2}^{\infty} \sum_{n_2 + \ldots + n_{k-1} = n, n_i > 0} \prod_{i=2}^{k-1} |C_{n_i}| e^{-n_i \alpha}\right).
\]
(The extra factor of $n$ appears because a particular choice for $n_2, \ldots, n_{k-1}$ could correspond to several different choices of 
$n_1, \ldots, n_k$, but not more than $n$.) Finally, we note that all terms in the second sum are part of the expansion of $(f(\alpha))^{k-2} = \left( \sum_{j = 1}^{\infty} |C_j| e^{-j\alpha} \right)^{k-2}$, and so
\[
|\mathcal{L}_n(X)| < e^{n\alpha} \left( M + nM^2 \sum_{k=2}^{\infty} (f(\alpha))^{k-2} \right).
\]
Since $f(\alpha) < 1$, we rewrite as
\[
|\mathcal{L}_n(X)| < e^{n\alpha} \left(M + nM^2 \frac{1}{1 - f(\alpha)}\right).
\]
Taking logarithms, dividing by $n$, and letting $n \rightarrow \infty$ shows $h(X) \leq \alpha$.

\end{proof}

\begin{proof}[Proof of Theorem~\ref{aux2}]

Suppose that $C$ has unique decomposition and that $f(\alpha) > 1$. Then clearly we can choose $t$ so that $\sum_{j=1}^t |C_j| e^{-j\alpha} > 1$; denote this truncated sum by $\eta$. For each $k$, consider the expansion 
\[
\eta^k = \left( \sum_{j = 1}^{t} |C_j| e^{-j\alpha} \right)^k = \sum_{n = k}^{tk} \sum_{n_1 + \ldots + n_k = n, 0 < n_i \leq t} e^{-n\alpha} \prod_{i=1}^k 
|C_{n_i}|.
\]

Clearly we can choose $n = N_k$ for which the first sum is maximized, yielding
\[
\frac{\eta^k}{tk} < \sum_{n_1 + \ldots + n_k = N, 0 < n_i \leq t} e^{-N\alpha} \prod_{i=1}^k |C_{n_i}| \Rightarrow 
\sum_{n_1 + \ldots + n_k = N, 0 < n_i \leq t} \prod_{i=1}^k |C_{n_i}| > \frac{e^{N\alpha} \eta^k}{tk}.
\]

We note that for every choice of $n_1, \ldots, n_k > 0$ with $\sum n_i = N$, the sets of concatenations $C_{n_1} \ldots C_{n_k}$ are all in $\mathcal{L}_N(X)$, and that by unique decomposition, for different $k$-tuples $(n_i) \neq (n'_i)$, the associated collections of words are disjoint. Therefore,
\[
|\mathcal{L}_N(X)| \geq \sum_{n_1 + \ldots + n_k = N, 0 < n_i \leq t} \prod_{i=1}^k |C_{n_i}| > \frac{e^{N\alpha} \eta^k}{tk}.
\]
Recall that $k \leq N \leq tk$, and so
\[
|\mathcal{L}_N(X)| > \frac{e^{N\alpha} \eta^{N/t}}{tN}.
\]
Since $N \geq k$, taking $k$ to infinity will force $N$ to approach infinity. We can then take logarithms, divide by $N$, and let $N$ approach infinity to get
\[
h(X) \geq \alpha + t^{-2} \ln \eta.
\]
Since $\eta > 1$, this shows that $h(X) > \alpha$, completing the proof.

\end{proof}

We can now present the proofs of Theorems~\ref{mainthm1} and \ref{mainthm3}.

\begin{proof}[Proof of Theorem~\ref{mainthm1}]
Suppose that $f(h(L)) \leq 1$. Then, for every $\alpha > h(L)$, $f(\alpha) < 1$ since $f$ is strictly decreasing. By Theorem~\ref{aux1}, then $h(X) \leq \alpha$. Since $\alpha > h(L)$ was arbitrary, $h(X) \leq h(L)$. However, since $L \subseteq X$, $h(L) \leq h(X)$ trivially, and so $h(L) = h(X)$. Examples~\ref{nomoreMME} and \ref{moreMME} from Section~\ref{examples} will demonstrate that $X$ can either possess or not possess a measure of maximal entropy with support not contained in $L$. 
\end{proof}

\begin{proof}[Proof of Theorem~\ref{mainthm3}]
Suppose that $C$ has unique decomposition and that $f(h(L)) > 1$. Since $|C_n| < |A|^n$ for every $n$, $\lim_{x \rightarrow \infty} f(x) = 0$. Therefore, by the Intermediate Value Theorem, there exists $x > h(L)$ for which $f(x) = 1$. We need to show that $h(X) = x$.

Since $f$ is strictly decreasing, for every $\alpha > x$, $f(\alpha) < 1$, and so $h(X) \leq \alpha$ by Theorem~\ref{aux1}. Since $\alpha > x$ was arbitrary, $h(X) \leq x$. Similarly, for every $\alpha \in [h(L), x)$, $f(\alpha) > 1$, and so $h(X) \geq \alpha$ by Theorem~\ref{aux2}; again since $\alpha \in [h(L), x)$ was arbitrary, $h(X) \geq x$, completing the proof that $h(X) = x$.




\end{proof}

Finally we must prove Theorem~\ref{mainthm2}, which requires a slightly different counting argument.

\begin{proof}[Proof of Theorem~\ref{mainthm2}]
Suppose that $f(h(L)) < 1$, and consider any word $u \in \mathcal{L}(X) \setminus \mathcal{L}(L)$. Since $u \notin \mathcal{L}(L)$, there exists $N$ so that $u$ does not appear as a subword of any $C$-word at a location with distance more than $N$ from the beginning and end. (If this were not the case, then $u$ would be contained in a sequence of $C$-words at distances arbitrarily far from the ends, implying $u \in \mathcal{L}(L)$.) 

Choose any ergodic measure $\mu$ with $\mu([u]) > 0$. Then, for every $n$, define 
\[
G_n = \{w \in \mathcal{L}_n(X) \ : \ w \textrm{ contains at least } n (\mu([u])/2) \textrm{ occurrences of } u\}.
\]
By the ergodic theorem, $\mu(G_n) \rightarrow 1$, and so by standard arguments using definition of entropy (for a formal proof, see for example Lemma 4.8 of \cite{perturb},  
\begin{equation}\label{generic}
\liminf_{n \rightarrow \infty} \frac{\ln |G_n|}{n} \geq h(\mu). 
\end{equation}
For any $w \in G_n$, we may decompose it as $s_1 w_2 \ldots w_{k-1} p_k$, where $s_1$ is a suffix of a word in $C$, $p_k$ is a prefix of a word in $C$, and each $w_i$ is in $C$. By definition of $G_n$, $w$ contains at least $n (\mu([u])/2)$ occurrences of $u$, each of which either contains or is within distance $N$ of one or more of: the beginning of $w$, the end of $w$, or one of the $k-1$ transitions in the concatenation $s_1 w_2 \ldots w_{k-1} p_k$. This clearly implies that $n (\mu([u])/2) \leq 2N + (k-1)(|u| + 2N) \leq k(|u| + 2N)$. Put another way, every word in $G_n$ has a decomposition as a subword of a concatenation of $k$ words from $C$, where $k \geq \beta n$ for $\beta = \mu([u])/{2(|u| + 2N)} > 0$.  

Define $P_n$, $S_n$, and $W_n$ as in the proof of Theorem~\ref{aux1}. Then a similar counting argument to the one used there yields
\[
|G_n| \leq \sum_{k \geq \beta n} \sum_{n_1 + \ldots + n_k = n, n_i > 0} |S_{n_1}| |C_{n_2}| \ldots |C_{n_{k-1}}| |P_{n_k}|.
\]
Choose any $0 < \epsilon < -\beta \ln f(h(L))$ and apply (\ref{wordbds}) to get
\[
|G_n| \leq \sum_{k \geq \beta n} n M^2 e^{n(h(L) + \epsilon)} \sum_{n_2 + \ldots + n_{k-1} = n, n_i > 0} \prod_{i=2}^{k-1} |C_{n_i}| e^{-n_i (h(L) + \epsilon)}.
\]
Just as in the proof of Theorem~\ref{aux1}, the inner sum is less than $f(h(L) + \epsilon)^{k-2}$, which in turn is less than 
$(f(h(L)))^{k-2}$, so
\[
|G_n| \leq nM^2 e^{n(h(L) + \epsilon)} \sum_{k \geq \beta n} (f(h(L)))^{k-2}.
\]
Since $f(h(L)) < 1$, we rewrite as
\[
|G_n| \leq \frac{nM^2}{1 - f(h(L))} e^{n(h(L) + \epsilon)} (f(h(L)))^{\beta n}.
\]
Taking logarithms, dividing by $n$, and taking the limit infimum as $n$ approaches infinity (and recalling (\ref{generic})) yields
\[
h(\mu) \leq h(L) + \epsilon + \beta \ln f(h(L)) < h(L).
\]
Since $h(X) \geq h(L)$ (in fact $h(X) = h(L)$ by Theorem~\ref{mainthm1}), $\mu$ is not a measure of maximal entropy. Since $\mu$ was an arbitrary ergodic measure giving $[u]$ positive measure, we know that all ergodic measures of maximal entropy for $X$ give $[u]$ zero measure. Since every measure of maximal entropy on a subshift is a convex combination of ergodic measures of maximal entropy, in fact all measures of maximal entropy give $[u]$ zero measure. Finally, since $u \in \mathcal{L}(X) \setminus \mathcal{L}(L)$ was arbitrary, we are done.

\end{proof}

\section{examples}\label{examples}

We first give examples of $C$ with $f_C(h(L_C)) = 1$ and where $X_C$ either has or does not have an MME with support not contained in $L_C$, completing the proof of Theorem~\ref{mainthm1}. For both of these examples, we use the associated countable state Markov chain $E(G_C)$ defined in 
Section~\ref{markov}. In general, the lack of structure of $L_C$ (for instance, it could be the case that $L_C = B_C = X_C$!) makes the analysis of the $f_C(h(L_C)) = 1$ case via $E(G_C)$ intractable. However, throughout this section all examples will have unique decipherability and will satisfy 
$B_C \cap L_C = \varnothing$, for which we have the following useful result. 

\begin{proposition}\label{extraMME}
If $C$ has unique decipherability, $B_C \cap L_C = \varnothing$, and $f_C(h(L_C)) = 1$, then $X_C$ has an ergodic measure of maximal entropy with support not contained in $L_C$ if and only if the associated countable state Markov chain $E(G_C)$ has a measure of maximal entropy.
\end{proposition}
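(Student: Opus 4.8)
The plan is to use the label map $\ell_C$ to identify the noncompact part $B_C$ of $X_C$ with the countable Markov chain $E(G_C)$ as measurable dynamical systems, after first observing that the hypothesis $f_C(h(L_C)) = 1$ collapses the relevant entropies to a single value. For this reduction I would show $h(G_C) = h(L_C) = h(X_C)$: Section~\ref{markov} records that $f_C(h(G_C)) = \sum_n s(u,n) e^{-n h(G_C)} \le 1$, and since $f_C$ is strictly decreasing on its interval of convergence with $f_C(h(L_C)) = 1$, this forces $h(G_C) \ge h(L_C)$; on the other hand $h(X_C) \ge h(G_C)$ always, while Theorem~\ref{mainthm1} gives $h(X_C) = h(L_C)$. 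Chaining yields $h(L_C) = h(X_C) \ge h(G_C) \ge h(L_C)$, so all three agree, and in particular any measure of maximal entropy on $E(G_C)$ has entropy exactly $h(X_C)$.

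Next I would set up $\ell_C$ as a measurable conjugacy onto $B_C$. Because the internal vertices of each loop have in- and out-degree one, every biinfinite path in $G_C$ meets $u$ infinitely often in both directions, so $\ell_C$ maps $E(G_C)$ onto $B_C$ letter by letter and intertwines the edge shift with $\sigma$. Unique decipherability makes $\ell_C$ injective; since $E(G_C)$ is Polish and $\ell_C$ is continuous, the Lusin--Souslin theorem shows that $B_C$ is Borel and that $\ell_C$ is a Borel isomorphism of $E(G_C)$ onto $B_C$. Consequently $\ell_C$ induces an entropy- and ergodicity-preserving bijection between shift-invariant Borel probability measures on $E(G_C)$ and $\sigma$-invariant Borel probability measures $\mu$ on $X_C$ with $\mu(B_C) = 1$.

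The equivalence is then bookkeeping. For ergodic $\mu$ on $X_C$, Lemma~\ref{LB} gives $\mu(L_C \cup B_C) = 1$; since $B_C \cap L_C = \varnothing$ and both sets are $\sigma$-invariant, ergodicity forces $\mu(L_C), \mu(B_C) \in \{0,1\}$ with $\mu(L_C) + \mu(B_C) = 1$, and as $L_C$ is closed, $\mathrm{supp}(\mu) \subseteq L_C$ holds exactly when $\mu(L_C) = 1$. Hence $\mathrm{supp}(\mu) \not\subseteq L_C$ is equivalent to $\mu(B_C) = 1$. In the forward direction, an ergodic MME with support not in $L_C$ therefore satisfies $\mu(B_C) = 1$, and pulling it back through $\ell_C$ produces a measure on $E(G_C)$ of entropy $h(X_C) = h(G_C)$, i.e.\ a measure of maximal entropy. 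Conversely, a measure of maximal entropy $\nu$ on $E(G_C)$ pushes forward to $\mu = (\ell_C)_*\nu$ with $\mu(B_C) = 1$ and $h(\mu) = h(G_C) = h(X_C)$; passing to an ergodic component (each of which still has entropy $h(X_C)$ and gives $B_C$ full measure) yields an ergodic MME with $\mu(L_C) = 0$, hence support not contained in $L_C$.

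I expect the main obstacle to be the conjugacy step: one must verify that $\ell_C$ is a genuine measurable isomorphism, meaning that $B_C$ is Borel and $\ell_C^{-1}$ is measurable --- precisely where unique decipherability and a descriptive-set-theoretic input such as Lusin--Souslin are used --- and must check that the transported measures really charge $B_C$ fully rather than only the larger closed set $\overline{B_C} = X_C$. The entropy identity of the first step is the conceptual linchpin that makes maximal-entropy measures correspond across $\ell_C$; without the hypothesis $f_C(h(L_C)) = 1$, the entropies $h(G_C)$ and $h(X_C)$ could differ and no such matching of maximal measures would survive.
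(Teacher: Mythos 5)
Your proof is correct and takes essentially the same route as the paper: the same entropy collapse $h(G_C) = h(L_C) = h(X_C)$ via $f_C(h(G_C)) \leq 1$ and Theorem~\ref{mainthm1}, the same use of unique decipherability to make $\ell_C$ a shift-commuting Borel bijection between $E(G_C)$ and $B_C$ (hence between invariant measures on $E(G_C)$ and measures on $X_C$ giving $B_C$ full measure), and the same ergodic/support bookkeeping via Lemma~\ref{LB} and the ergodic decomposition. Your appeal to Lusin--Souslin merely makes explicit a measurability point that the paper treats as routine.
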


\begin{proof}
As usual, we suppress dependence on $C$ for $X, f, L, B, G$, and $\ell$ throughout the proof. Recall from Section~\ref{markov} that the inequalities $f(h(G)) \leq 1$ and $h(X) \geq h(G)$ always hold. Since $f(h(L)) = 1$, this means that $f(h(G)) \leq f(h(L))$ and so that $h(X) \geq h(G) \geq h(L)$. However, since $f(h(L)) = 1$, Theorem~\ref{mainthm1} implies that $h(X) = h(L)$, and so that all inequalities above are equalities. If $\mu$ is an ergodic measure on $X$, then by Lemma~\ref{LB} and the disjointness of $B$ and $L$, $\mu$ has support not contained in $L$ if and only if $\mu(B) = 1$. 

Recall the label map $\ell$ from Section~\ref{markov}, which maps $E(G)$ onto $B$. Since $C$ has unique decipherability, $\ell$ is bijective, and since it is also a Borel map which commutes with the shift, it yields a bijection between the measures of maximal entropy $h(X)$ on $X$ which give $B$ measure $1$ and the measures of maximal entropy $h(G) = h(X)$ on $E(G)$. Therefore, $E(G)$ has a measure of maximal entropy iff $X$ has an ergodic measure $\mu$ with measure not contained in $L$ and entropy $h(X)$. 

By the ergodic decomposition, $X$ has an ergodic measure with support not contained in $L$ and maximal entropy $h(X)$ iff it has any measure (not necessarily ergodic) with these properties, and so the proof is complete.

\end{proof}

\begin{remark}

We suspect that with more work, Proposition~\ref{extraMME} might be provable under the weaker assumption of unique decomposition. However, all examples to which we apply Proposition~\ref{extraMME} will satisfy unique decipherability anyway, and so we do not need any such extensions in this work.

\end{remark}

We may now present the examples for Theorem~\ref{mainthm1}.

\begin{example}\label{nomoreMME}
Define $C_{2n} = \{a_1 \ldots a_n 0^n \ : \ a_i \in \{1,2\}\}$ for all $n \in \mathbb{N}$, $C_j = \varnothing$ for all odd $j$, and $C = \bigcup_{j \in \mathbb{N}} C_j$. Then $C$ clearly has unique decipherability, $L = \{1,2\}^{\mathbb{Z}} \cup \{0^{\infty}\} \cup \{x \ : \ \exists k \textrm{ s.t. } \forall i < k \ x_i \in \{1,2\}, \forall i \geq k \ x_i = 0\}$, and so $h(L) = \ln 2$. Then,
\[
f(h(L)) = \sum_{j = 1}^{\infty} |C_j| e^{-jh(L)} = \sum_{n = 1}^{\infty} 2^n 2^{-2n} = 1.
\]
As in the proof of Proposition~\ref{extraMME}, we now know that $h(X) = h(G) = h(L) = \ln 2$, and so
\[
\sum_{n = 1}^{\infty} n s(n,u) e^{-nh(G)} = \sum_{n = 1}^{\infty} 2n 2^{n} 2^{-2n} = 4 < \infty.
\]
Therefore, $E(G)$ is positive recurrent, and so has a measure with entropy $h(G)$. By Proposition~\ref{extraMME}, $X$ has an MME with support not contained in $L$.
\end{example}

\begin{example}\label{moreMME}
Define $C_{n + \lfloor \log_2 n \rfloor} = \{a_1 \ldots a_n 0^{\lfloor \log_2 n \rfloor} \ : n \geq 2, \ a_i \in \{1,2,3,4\}\}$ for every 
$n \in \mathbb{N}$, $C_j = \varnothing$ for all other $j \in \mathbb{N}$, and $C = \bigcup_{j \in \mathbb{N}} C_j$. Then $C$ clearly has unique decipherability, 
$L = \{1,2,3,4\}^{\mathbb{Z}} \cup \{0^{\infty}\} \cup \{x \ : \ \exists k \textrm{ s.t. } \forall i < k \ x_i \in \{1,2,3,4\}, \forall i \geq k \ x_i = 0\}$, and so $h(L) = \ln 4$. Then,
\[
f(h(L)) = \sum_{j = 1}^{\infty} |C_j| e^{-jh(L)} = \sum_{n = 2}^{\infty} 4^n 4^{-(n + \lfloor \log_2 n \rfloor)} = 1.
\]
As in the proof of Proposition~\ref{extraMME}, we now know that $h(X) = h(G) = h(L) = \ln 4$, and so
\[
\sum_{n = 1}^{\infty} n s(n,u) e^{-nh(G)} = \sum_{n = 2}^{\infty} (n + \lfloor \log_2 n \rfloor) 4^n 4^{-(n + \lfloor \log_2 n \rfloor)} \geq \sum_{n = 2}^{\infty} n 4^{-\log_2 n} = \sum_{n = 2}^{\infty} \frac{1}{n} = \infty.
\]
Therefore, $E(G)$ is null recurrent, and so does not have a measure with entropy $h(G)$. By Proposition~\ref{extraMME}, $X$ does not have an MME with support not contained in $L$.
\end{example}

We conclude with a few examples from the literature which can be treated via our more general results. 
First, we consider the Dyck shift of \cite{krieger}.

\begin{example}
Choose the alphabet $A = \{( \ , \ [ \ , \ ) \ , \ ]\}$, and define $C$ to be the set of all minimal words which reduce to the identity under the relations $( \ ) = [ \ ] = \textrm{id}$. For example, $( \ [ \ ] \ ) \in C$, but $( \ [ \ ) \ ] \notin C$ since it does not reduce to the identity, and $( \ ) \ [ \ ] \notin C$ since it does reduce to the identity, but is not minimal with that property, since it can be decomposed into $( \ )$ and $[ \ ]$, which both reduce to the identity (and are in $C$.)

Then $X = X_C$ is the so-called Dyck shift. Also, it is easily checked that every proper prefix of a word in $C$ must have strictly more left parentheses/brackets than right ones, and the opposite is true for a proper suffix of a word in $C$. Therefore, no word can be both a proper prefix and proper suffix of words in $C$, and so $C$ has unique decipherability. Also, $L = L_C = X$; this is most easily seen by noting that for any $w_1, \ldots, w_k \in C$, the word $( w_1 \ldots w_k )$ (where the $w_i$ are concatenated and then surrounded by one set of parentheses) is in $C$. Finally, it is known that $h(X) = \ln 3$ and that $X$ has multiple measures of maximal entropy (in fact exactly two ergodic ones; see \cite{krieger}). 

Therefore, by Corollary~\ref{maincor}, it should be the case that $f(h(L)) \leq 1$, and in fact this is the case. It's well known that the number of ways to arrange $n+1$ sets of matching parentheses in an indecomposable way (without the two types listed above) is the $n$th Catalan number $\frac{(2n-2)!}{n!(n-1)!}$. For any $n \geq 1$, $C_{2n}$ is obtained by labeling each set of parentheses in every such word by one of two types (parentheses or brackets), and so $|C_{2n}| = \frac{(2n-2)!}{n!(n-1)!} 2^{n}$. 
Also, the generating function for the Catalan numbers is
\[
\sum_{n = 1}^{\infty} \frac{(2n-2)!}{n!(n-1)!} x^n = \frac{1 - \sqrt{1 - 4x}}{2}.
\]
Therefore, $f(h(L)) = f(\ln 3) = $
\[
\sum_{n = 1}^{\infty} \frac{(2n-2)!}{n!(n-1)!} 2^n e^{-2n \ln 3}
= \sum_{n = 1}^{\infty} \frac{(2n-2)!}{n!(n-1)!} \left(\frac{2}{9} \right)^n = 
\frac{1 - \sqrt{1 - 4(2/9)}}{2} = \frac{1}{3} \leq 1,
\]
as expected.
\end{example}

We now give some examples from \cite{pavlovspec}, which are coded subshifts with various weakened specification properties and multiple measures of maximal entropy. 

\begin{example} 
Choose $N > e^6$, take $A = \{-N, \ldots, N\}$, and define $C = \{0, w_1 \ldots w_n 0^k \ : \ \textrm{all } w_i \neq 0, \textrm{ all } w_i \textrm{ have the same sign}, k = 1 + \lfloor \ln n \rfloor\}$. 
It is shown in \cite{pavlovspec} that the induced coded subshift $X$ has a property called non-uniform specification (with gap function $1 + \lfloor \ln n \rfloor$).

It is easy to check that $L = \{-N,\ldots,-1\}^{\mathbb{Z}} \cup \{1,\ldots,N\}^{\mathbb{Z}} \cup \{0^{\infty}\} \cup \{x \ : \ \exists k \textrm{ s.t. } \forall i < k \ x_i < 0, \forall i \geq k \ x_i = 0\} \cup \{x \ : \ \exists k \textrm{ s.t. } \forall i < k \ x_i > 0, \forall i \geq k \ x_i = 0\}$, and so $h(L) = \ln N$. Then $f(h(L)) = $
\begin{multline*}
\sum_{n = 1}^{\infty} |C_n| e^{-n h(L)} = e^{-\ln N} + \sum_{n = 1}^{\infty} 2N^n e^{-(n + 1 + \lfloor \ln n \rfloor) \ln N}
= \frac{1}{N} \left(1 + 2 \sum_{n=1}^{\infty} \frac{1}{N^{\lfloor \ln n \rfloor}} \right)\\
\leq \frac{3}{N} + 2 \sum_{n=2}^{\infty} \frac{1}{N^{\ln n}} = \frac{3}{N} + 2 \sum_{n=2}^{\infty} \frac{1}{n^{\ln N}} \leq \frac{3}{N} + 2\int_2^{\infty} x^{-\ln N} = \frac{3}{N} + \frac{2}{\ln N - 1},
\end{multline*}
which is less than $1$ since $N > e^6$. Then by Theorem~\ref{mainthm2}, $h(X) = h(L) = \ln N$, and so $X$ has multiple measures of maximal entropy, namely the uniform Bernoulli measures on the disjoint full shifts $\{-N,\ldots,-1\}^{\mathbb{Z}}$ and $\{1,\ldots,N\}^{\mathbb{Z}}$, which are both contained in $L$.
\end{example}

\begin{example}
Choose any $N \geq 10$ and define $A = \{-N, \ldots, -1, 1, \ldots, N\}$. For every $n$, define $P_n$ to be a subset of $\{1,\ldots,N\}^n$ of minimal size which is $2$-spanning in the Hamming metric, and define $N_n = -P_n$. Define $C = \{uv \ : \ \exists n,m \textrm{ s.t. } u \in P_n, v \in N_m\}$. 
It is shown in \cite{pavlovspec} that the induced coded subshift $X$ has a property called almost specification (with gap function 
$g(n) = 4$).

The reader may check that
$L = \{-N,\ldots,-1\}^{\mathbb{Z}} \cup \{1,\ldots,N\}^{\mathbb{Z}} \cup \{x \ : \ \exists k \textrm{ s.t. } \forall i < k \ x_i < 0, \forall i \geq k \ x_i > 0\} \cup \{x \ : \ \exists k \textrm{ s.t. } \forall i < k \ x_i > 0, \forall i \geq k \ x_i < 0\}$, and so 
$h(L) = \ln N$. 

To bound $f(h(L))$, we need bounds on $|P_n| = |N_n|$ for $n \in \mathbb{N}$. Clearly $|P_1| = |P_2| = 1$, and it is shown in \cite{pavlovspec} that for $n > 2$, $|P_n| \leq \min(N^{n-2}, 16n^{-2} N^n)$. Therefore, 
\begin{multline*}
f(h(L)) = \sum_{n = 1}^{\infty} |C_n| e^{-n h(L)} = \sum_{n = 1}^{\infty} \sum_{i=1}^{n-1} |P_i| |N_{n-i}| N^{-n} 
= \left( \sum_{m = 1}^{\infty} |P_m| N^{-m} \right)^2 \\
\leq \left(N^{-1} + \sum_{m = 2}^{\infty} \min(N^{-2}, 16m^{-2}) \right)^2 \leq \left(N^{-1} + 31N^{-2} + 16 \sum_{m=33}^{\infty} m^{-2}\right)^2 \\
\leq (0.1 + 0.31 + 16 \int_{32}^{\infty} x^{-2})^2 = (0.41 + 0.5)^2 < 1.
\end{multline*}
Then by Theorem~\ref{mainthm2}. $h(X) = h(L) = \ln N$, and again $X$ has multiple measures of maximal entropy given by the uniform Bernoulli measures on the disjoint full shifts $\{-N,\ldots,-1\}^{\mathbb{Z}}$ and $\{1,\ldots,N\}^{\mathbb{Z}}$, both of which are contained in $L$.
\end{example}

\begin{remark}
We do not give a full description here as the examples are a little more technical, but \cite{kwietniaketal} contains different subshifts with the same weakened specification properties and multiple MMEs, which are also coded systems which could be placed into our framework. 
\end{remark}

\begin{example}\label{petex}
In \cite{cec}, Petersen describes the so-called loop method for computing the entropy of an irreducible nearest-neighbor SFT $Y$. For any such $Y$, choose a distinguished letter $a$ and for $i \in \mathbb{N}$, define $T_i$ to be the set of words of length $i + 2$ which begin and end with $a$ and do not contain any other $a$. If we then define $C = \{w \ : \ \exists i \textrm{ s.t. }
wa \in T_i\}$, then it's easily checked that $C$ has unique decipherability and that $Y = X_C$, the coded subshift induced by $C$. By irreducibility of $Y$, it's simple to see that $L$ is just $Y^{(a)}$, the subshift of $Y$ consisting of points which do not contain $a$.

Then, by irreducibility it's simple to show that there exists a distance $D$ so that $|T_i| \geq |\mathcal{L}_{i-D}(Y^{(a)})|$ for all $a$, and by (\ref{wordcount}), this is greater than or equal to $e^{h(Y^{(a)}) (i-D)}$. Therefore,
\[
f(h(L)) = \sum_{n=1}^{\infty} |C_n| e^{-nh(L)} = \sum_{n=1}^{\infty} |T_{n+1}| e^{-nh(Y^{(a)})} \geq \sum_{n=1}^{\infty} e^{-h(Y^{(a)})(d-1)} = \infty > 1.
\]

Then by Theorem~\ref{mainthm3}, $h(Y)$ is the unique root $\alpha$ of 
\[
\sum_{n=1}^{\infty} |C_n| e^{-n\alpha} = 1.
\]
Since $|C_n| = |T_{n-1}|$ for $n > 0$, this is the same as the logarithm of the root $x$ of
\begin{equation}\label{loop}
\sum_{i=0}^{\infty} \frac{|T_i|}{x^{i+1}} = 1,
\end{equation}
which is precisely the formula described in \cite{cec}.

\end{example}

\begin{remark}

As long as $Y$ is an irreducible subshift for which $a$ is a synchronizing letter, it is true that 
$Y = X_C$ for $C$ as above. The reader may check that again $L = Y^{(a)}$. Therefore, whenever $Y$ is an irreducible subshift for which $a$ is a synchronizing letter and 
\[
\sum_{n=1}^{\infty} |T_{n+1}| e^{-nh(Y^{(a)})} > 1
\]
(in particular, if $h(Y) > h(Y^{(a)})$), then the same formula for $h(Y)$ holds, i.e. that it is the logarithm of the root $x$ of (\ref{loop}).

\end{remark}

\bibliographystyle{plain}
\bibliography{coded}

\begin{thebibliography}{10}

\bibitem{codedbook}
Jean Berstel and Dominique Perrin.
\newblock {\em Theory of codes}, volume 117 of {\em Pure and Applied
  Mathematics}.
\newblock Academic Press, Inc., Orlando, FL, 1985.

\bibitem{blanhan}
F.~Blanchard and G.~Hansel.
\newblock Syst\`emes cod\'es.
\newblock {\em Theoret. Comput. Sci.}, 44(1):17--49, 1986.

\bibitem{vaughntowers}
Vaughn Climenhaga.
\newblock {Specification and towers in shift spaces}.
\newblock {\em ArXiv e-prints}, February 2015.

\bibitem{vaughnronnie}
Vaughn Climenhaga and Ronnie Pavlov.
\newblock {One-sided almost specification and intrinsic ergodicity}.
\newblock {\em ArXiv e-prints}, October 2017.

\bibitem{CT0}
Vaughn Climenhaga and Daniel~J. Thompson.
\newblock Intrinsic ergodicity beyond specification: {$\beta$}-shifts,
  {$S$}-gap shifts, and their factors.
\newblock {\em Israel J. Math.}, 192(2):785--817, 2012.

\bibitem{kwietcoded}
Jeremias Epperlein, Dominik Kwietniak, and Piotr Oprocha.
\newblock {Mixing properties in coded systems}.
\newblock {\em ArXiv e-prints}, March 2015.

\bibitem{fiebigs}
Doris Fiebig and Ulf-Rainer Fiebig.
\newblock Covers for coded systems.
\newblock In {\em Symbolic dynamics and its applications ({N}ew {H}aven, {CT},
  1991)}, volume 135 of {\em Contemp. Math.}, pages 139--179. Amer. Math. Soc.,
  Providence, RI, 1992.

\bibitem{gurevic3}
B.~M. Gurevi{\v{c}}.
\newblock Shift entropy and {M}arkov measures in the space of paths of a
  countable graph.
\newblock {\em Dokl. Akad. Nauk SSSR}, 192:963--965, 1970.

\bibitem{krieger}
Wolfgang Krieger.
\newblock On the uniqueness of the equilibrium state.
\newblock {\em Math. Systems Theory}, 8(2):97--104, 1974/75.

\bibitem{kriegercoded}
Wolfgang Krieger.
\newblock On subshifts and topological {M}arkov chains.
\newblock In {\em Numbers, information and complexity ({B}ielefeld, 1998)},
  pages 453--472. Kluwer Acad. Publ., Boston, MA, 2000.

\bibitem{kwietniaketal}
Dominik Kwietniak, Piotr Oprocha, and Micha{\l} Rams.
\newblock On entropy of dynamical systems with almost specification.
\newblock {\em Israel J. Math.}, 213(1):475--503, 2016.

\bibitem{perturb}
Ronnie Pavlov.
\newblock Perturbations of multidimensional shifts of finite type.
\newblock {\em Ergodic Theory Dynam. Systems}, 31(2):483--526, 2011.

\bibitem{pavlovspec}
Ronnie Pavlov.
\newblock On intrinsic ergodicity and weakenings of the specification property.
\newblock {\em Adv. Math.}, 295:250--270, 2016.

\bibitem{cec}
Karl Petersen.
\newblock Chains, entropy, coding.
\newblock {\em Ergodic Theory Dynam. Systems}, 6(3):415--448, 1986.

\bibitem{ruette}
Sylvie Ruette.
\newblock On the {V}ere-{J}ones classification and existence of maximal
  measures for countable topological {M}arkov chains.
\newblock {\em Pacific J. Math.}, 209(2):366--380, 2003.

\bibitem{verejones}
D.~Vere-Jones.
\newblock Ergodic properties of nonnegative matrices. {I}.
\newblock {\em Pacific J. Math.}, 22:361--386, 1967.

\bibitem{walters}
Peter Walters.
\newblock {\em An Introduction to Ergodic Theory}.
\newblock Number~79 in Graduate Texts in Mathematics. Springer-Verlag, 1982.

\end{thebibliography}

\end{document}